\providecommand{\U}[1]{\protect\rule{.1in}{.1in}}
\pgfplotsset{compat=1.15}
\let\oldmathbf\mathbf
\renewcommand{\mathbf}[1]{\boldsymbol{\oldmathbf{#1}}}
\newtheorem{theorem}{Theorem}
\newtheorem*{theorem*}{Theorem}
\newtheorem{corollary}[theorem]{Corollary}
\newtheorem{example}[theorem]{Example}
\newtheorem{lemma}[theorem]{Lemma}
\newtheorem{remark}[theorem]{Remark}
\definecolor{zzttqq}{rgb}{0.6,0.2,0.}
\begin{document}
\title[Irregularities of distribution]{Irregularities of distribution for bounded sets and half-spaces}
\author[L. Brandolini]{Luca Brandolini}
\address{Dipartimento di Ingegneria Gestionale, dell'Informazione e della Produzione,
Universit\`{a} degli Studi di Bergamo, Viale Marconi 5, 24044 Dalmine BG, Italy}
\email{luca.brandolini@unibg.it}
\author[L. Colzani]{Leonardo Colzani}
\address{Dipartimento di Matematica e Applicazioni, Universit\`{a} di Milano-Bicocca,
Via Cozzi 55, 20125 Milano, Italy}
\email{leonardo.colzani@unimib.it}
\author[G. Travaglini]{Giancarlo Travaglini}
\address{Dipartimento di Matematica e Applicazioni, Universit\`{a} di Milano-Bicocca,
Via Cozzi 55, 20125 Milano, Italy}
\email{giancarlo.travaglini@unimib.it}
\subjclass[2010]{Primary 11K38, 42B10}
\keywords{Irregularities of distribution, Geometric discrepancy, Roth's theorem, Fourier
transforms, Cassels-Montgomery lemma}

\begin{abstract}
We prove a general result on irregularities of distribution for Borel sets
intersected with bounded measurable sets or affine half-spaces.

\end{abstract}
\maketitle

\section{Introduction}

According to the Mathematical Reviews the term \textquotedblleft
Irregularities of Distribution\textquotedblright\ had never been used in
Mathematics before the publication of Roth's seminal paper \cite{roth}.
Starting from a celebrated conjecture of van der Corput, Roth proved the
following result, which shows that no finite sequence in the unit square can
be too evenly distributed.

\begin{theorem}
[Roth]There exists a constant $c>0$ such that for every set $\mathcal{P}%
_{N}=\left\{  z_{1},z_{2},\ldots z_{N}\right\}  $ of $N$ points in the torus
$\mathbb{T}^{2}$ we have%
\[
\int_{\mathbb{T}^{2}}\left\vert \mathrm{card}\left(  \mathcal{P}_{N}\cap
I_{x}\right)  -Nx_{1}x_{2}\right\vert ^{2}\ dx_{1}dx_{2}\geq c\log\left(
N\right)  \ ,
\]
where $I_{x}=\left[  0,x_{1}\right]  \times\left[  0,x_{2}\right]  $ for every
point $x=\left(  x_{1},x_{2}\right)  \in\left[  0,1\right)  ^{2}$.
\end{theorem}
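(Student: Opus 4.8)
The plan is to use an orthogonal function argument in the spirit of Roth's original proof. Write $D(x)=\mathrm{card}(\mathcal{P}_N\cap I_x)-Nx_1x_2$ for the discrepancy function, so that the claim is $\|D\|_{L^2([0,1)^2)}^2\ge c\log N$. Fix the integer $n$ with $2N\le 2^n<4N$. For each $j\in\{0,1,\ldots,n\}$ partition $[0,1)^2$ into the $2^n$ dyadic boxes $R=I\times J$ with $|I|=2^{-j}$ and $|J|=2^{-(n-j)}$ (the boxes of level $j$), and attach to each the Haar-type function $h_R(x)=h_I(x_1)\,h_J(x_2)$, where $h_I$ equals $+1$ on the left dyadic half of $I$ and $-1$ on the right half. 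Since $\mathcal{P}_N$ has at most $N$ points while there are $2^n\ge 2N$ boxes of each level, at least $N$ of the level-$j$ boxes meet no point of $\mathcal{P}_N$; let $\mathcal{E}_j$ be this family, and set $F_j=-\sum_{R\in\mathcal{E}_j}h_R$ and $F=\sum_{j=0}^n F_j$.

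The first step is the lower bound $\langle D,F_j\rangle\ge c_0>0$, uniformly in $j$. I would show that $\int_R D\,h_R=-N2^{-2n}/16$ whenever $R$ is empty. The linear term gives a product of one-dimensional integrals, $\int_R Nx_1x_2\,h_R=N\bigl(\int_I x_1h_I\bigr)\bigl(\int_J x_2h_J\bigr)=N(-|I|^2/4)(-|J|^2/4)=N2^{-2n}/16$, using the elementary identity $\int_I x\,h_I(x)\,dx=-|I|^2/4$ (valid regardless of the position of $I$). The counting term contributes nothing: for $x\in R$ one splits $\mathrm{card}(\mathcal{P}_N\cap I_x)$ according to whether a point of $\mathcal{P}_N$ has first coordinate strictly to the left of $R$ or inside the vertical strip carrying $R$; using that $R$ is empty, the first part is a function of $x_2$ alone, and (after intersecting with $I_x$) the second part is a function of $x_1$ alone, and each of these is annihilated by $h_R$ because $\int_I h_I=\int_J h_J=0$. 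Summing over $R\in\mathcal{E}_j$ yields $\langle D,F_j\rangle=|\mathcal{E}_j|\cdot N2^{-2n}/16\ge N^2 2^{-2n}/16>1/256$, using $2^n<4N$.

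The second step is the upper bound $\|F\|_2\le\sqrt{n+1}$. The $F_j$ are mutually orthogonal: for $|I|\ne|I'|$ the dyadic intervals $I,I'$ are nested or disjoint, so $\int_{\mathbb{T}}h_Ih_{I'}=0$ (in the nested case because the inner Haar function has mean zero on a half of the outer interval where the outer one is constant), hence $\int_{\mathbb{T}^2}h_{I\times J}h_{I'\times J'}=0$ whenever the two levels differ, since levels are distinguished by $|I|=2^{-j}$. Within a single level the boxes are pairwise disjoint, so $\|F_j\|_2^2=\sum_{R\in\mathcal{E}_j}|R|\le 2^n\cdot 2^{-n}=1$. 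Therefore $\|F\|_2^2=\sum_{j=0}^n\|F_j\|_2^2\le n+1$.

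Combining the two steps with the Cauchy--Schwarz inequality,
\[
\frac{n+1}{256}\le\sum_{j=0}^n\langle D,F_j\rangle=\langle D,F\rangle\le\|D\|_2\,\|F\|_2\le\|D\|_2\sqrt{n+1},
\]
so $\|D\|_2^2\ge(n+1)/256^2\ge c\log N$ because $n\ge\log_2 N$. The crux is the vanishing of the counting-term contribution on empty boxes in the first step: this ``empty box'' mechanism is what compels the use of boxes at all $n+1$ aspect ratios $2^{-j}\times 2^{-(n-j)}$, and---through the orthogonality of the $F_j$---manufactures the $\sqrt{\log N}$ gap that Cauchy--Schwarz then transfers back to $D$.
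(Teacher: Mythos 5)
Your proof is correct, but note that the paper itself does not prove this statement: Roth's theorem is quoted in the introduction as the classical starting point, with a reference to \cite{roth}, and the machinery the authors actually develop (the Fourier transform of the discrepancy, the averaged decay estimates for $\widehat{\chi}_{\Omega}$ over dilations and rotations in Lemma \ref{Lemma Leo}, and the Cassels--Montgomery Lemma \ref{Lemma Cassels}) is aimed at rotation- and dilation-invariant families, where the lower bounds are powers of $N$, not logarithms. What you give is the classical orthogonal-function (Haar) argument going back to Roth: the identity $\int_I x\,h_I(x)\,dx=-|I|^2/4$, the vanishing of the counting term on empty dyadic boxes (this is exactly where the product structure of the anchored boxes $I_x$ and the axis-parallel dyadic geometry enter, and you correctly isolate it as the crux), the level-wise orthogonality giving $\|F\|_2\le\sqrt{n+1}$, and Cauchy--Schwarz; the bookkeeping with $2N\le 2^n<4N$ and $|\mathcal{E}_j|\ge N$ is right and yields the stated constants. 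The comparison with the paper is instructive: averaging over rotations, as in Theorem \ref{Thm 1}, erases the special role of the coordinate directions and produces bounds of the form $N^{1/2-\beta/(2\alpha)}$, whereas the logarithmic phenomenon is specific to a fixed orientation and to test functions adapted simultaneously to all $n+1$ dyadic aspect ratios $2^{-j}\times 2^{-(n-j)}$; no single body $\Omega$ in the paper's framework would recover it, so your Haar-function route is the appropriate one for this statement.
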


The monograph \cite{BC} is the basic reference. See also
\cite{chazelle,DT,matousek,montgomery,travaglini}.\bigskip

The rectangles $I_{x}$ in Roth's theorem can be replaced by different families
of sets (e.g. disks or intersections of $\left[  0,1\right)  ^{2}$ with
halfplanes), and the above (sharp) $\log\left(  N\right)  $ estimate may
change drastically. See e.g. \cite{beck,BT22,montgomery,schmidt}.\medskip

More generally, as pointed out by William Chen in \cite{chen}, today many of
the problems that concern Irregularities of Distribution, also called
Geometric Discrepancy, can be formulated in the following way:

Let $\mathcal{P}_{N}$ be a set of $N$ points in $\mathbb{R}^{d}$ ($d\geq2$)
and let $E\subseteq\mathbb{R}^{d}$. We want to estimate the quality of the
distribution of these points with respect to a probability measure $\mu$
supported in $E$. We consider a reasonably large family $\mathcal{R}$ of
measurable sets and, for $R\in\mathcal{R}$, we introduce the discrepancy%
\[
\mathcal{D}_{N}\left(  R\right)  =\mathrm{card}\left(  \mathcal{P}_{N}\cap
R\right)  -N\mu\left(  R\right)  \ .
\]
\bigskip

The aim of this paper is to follow the above approach, introduce a general
point of view and prove a few theorems which extend several known results.

We may choose $E$ and $\mu$ in a fairly general way: in particular
$E\subset\mathbb{R}^{d}$ can be a $k$-dimensional manifold ($k\geq2$) and
$\mu$ the associated Hausdorff measure, or $E$ can be a fractal set endowed
with a general $\alpha$-dimensional measure $\mu$ (see below). As for the
family $\mathcal{R}$ we consider affine copies of a given body $\Omega
\subset\mathbb{R}^{d}$ with possibly fractal boundary. We also obtain results
for spherical caps (that are intersections of a given manifold and all the
affine half-spaces). See Section \ref{sect esempi} below.

\medskip

Our arguments are essentially Fourier analytic and a classical lemma of
Cassels and Montgomery is a basic tool (see \cite[Chapter 6]{montgomery}, see also
\cite{BBG,BT22}).

\section{Notation and Main Results\label{Sect:notation}}

Let $\mu$ be a positive Borel measure on $\mathbb{R}^{d}$, let $E$ be its
support, and let $\mu\left(  E\right)  =1$. Let us assume the existence of
$0<\alpha\leqslant d$ and $c>0$ such that for every $d$-dimensional open ball
$B\left(  x,r\right)  $ with center $x$ and radius $r$ we have%
\begin{equation}
\mu\left(  B\left(  x,r\right)  \right)  \leqslant c\ r^{\alpha}.
\label{Ahlfors}%
\end{equation}
We recall that, by Frostman's lemma (see e.g. \cite[Chapter 2]{Mattila}), if
$E$ is a Borel set with positive $\alpha$-dimensional Hausdorff measure, then
one can always find a finite Borel measure $\mu$ supported on $E$ and
satisfying (\ref{Ahlfors}).\bigskip

In the following we will denote by $\left\vert F\right\vert $ the Lebesgue
measure of a measurable set $F\subseteq\mathbb{R}^{d}$.

Given a Borel set $\Omega\subset\mathbb{R}^{d}$, with $\left\vert
\Omega\right\vert >0$, let $\mathcal{R}_{\Omega}$ be the family of rotated,
dilated and translated copies of $\Omega$. More precisely, for given $a$ and
$b$, we set
\[
\mathcal{R}_{\Omega}=\left\{  \left(  x+\tau\sigma\Omega\right)  \cap
E:x\in\mathbb{R}^{d},\ a\leqslant\tau\leqslant b,\sigma\in SO\left(  d\right)
\right\}
\]
and we study $\mathcal{D}_{N}\left(  R\right)  $ for $R\in\mathcal{R}_{\Omega
}$. See e.g. \cite[p. 212]{DT} for a similar point of view.

We are mainly interested in the following two cases:

\begin{itemize}
\item $\Omega$ is a bounded Borel set satisfying suitable regularity
conditions related to the Minkowski content of the boundary (see Theorem
\ref{Thm 1} and Remark \ref{Remark Minkowski}),

\item $\Omega$ is a half-space (see Theorem \ref{Thm 3}).
\end{itemize}

We obtain the discrepancy of half-spaces as limit case of the discrepancy of a
family of balls of diverging radii.

\subsection{Discrepancy for bounded sets}

Our first result exhibits a lower bound for the discrepancy associated to the
family $\mathcal{R}_{\Omega}$.

\begin{theorem}
\label{Thm 1}Let $\mu$ be a positive Borel measure on $\mathbb{R}^{d}$, with
support in a bounded set $E$ and such that $\mu\left(  E\right)  =1$. We also
assume that $\mu$ satisfies (\ref{Ahlfors}) for a given $0<\alpha\leqslant d$.
Let $\Omega\subset\mathbb{R}^{d}$ be a given bounded Borel set and assume the
existence of constants $0<\beta\leq1$ and $\kappa_{1}>0$ such that for every
$h\in\mathbb{R}^{d}$ small enough we have
\begin{equation}
\left\vert \left(  h+\Omega\right)  \bigtriangleup\Omega\right\vert
\leqslant\kappa_{1}\,\left\vert h\right\vert ^{\beta}%
\label{Diff simm da sopra}%
\end{equation}
(here $A\bigtriangleup B=\left(  A\setminus B\right)  \cup\left(  B\setminus
A\right)  $ denotes the symmetric difference). Also assume there exist
$\kappa_{2}>0$, a direction $\overline{\Theta}$, and a decreasing sequence
$t_{n}\rightarrow0$, satisfying
\begin{equation}
t_{n}\leqslant\kappa_{3}t_{n+1}\label{non lacunare}%
\end{equation}
for a suitable $\kappa_{3}>0$, such that for every $n$ we have%
\begin{equation}
\kappa_{2}\,t_{n}^{\beta}\leqslant\left\vert \left(  t_{n}\overline{\Theta
}+\Omega\right)  \bigtriangleup\Omega\right\vert .\label{Diff simm da sotto}%
\end{equation}
Then there exist positive constants $a,b$ and $c$ such that for every point
distribution $\mathcal{P}_{N}=\left\{  z_{1},z_{2},\ldots z_{N}\right\}  $ we
have%
\begin{equation}
\left\{  \int_{a}^{b}\int_{SO\left(  d\right)  }\int_{\mathbb{R}^{d}%
}\left\vert \mathcal{D}_{N}\left(  x+\tau\sigma\Omega\right)  \right\vert
^{2}dxd\sigma d\tau\right\}  ^{1/2}\geqslant c\,N^{1/2-\beta/\left(
2\alpha\right)  }.\label{Discrep basso}%
\end{equation}

\end{theorem}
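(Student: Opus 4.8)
The plan is to run the classical Fourier-analytic (Roth–Montgomery) argument, with the Cassels–Montgomery lemma replacing the exact evaluation of exponential sums that works on the torus. Write $\mathcal{D}_N(x+\tau\sigma\Omega)=\sum_{j=1}^N\chi_{x+\tau\sigma\Omega}(z_j)-N\mu(x+\tau\sigma\Omega)$. Fixing $\tau$ and $\sigma$, view $x\mapsto\mathcal{D}_N(x+\tau\sigma\Omega)$ as a function on $\mathbb{R}^d$; since $\chi_{x+\tau\sigma\Omega}(z)=\chi_{-\tau\sigma\Omega}(x-z)$, this function is, up to the $N\mu$ term, a sum of translates of a single compactly supported kernel, so its Fourier transform factors as $\widehat{\chi_{\tau\sigma\Omega}}(-\xi)$ times $\big(\sum_j e^{-2\pi i z_j\cdot\xi}-N\widehat{\mu}(\xi)\big)$ (one must be a little careful because $x$ ranges over all of $\mathbb{R}^d$ rather than a torus, but $E$ and $\Omega$ are bounded so everything is supported in a fixed large ball and Plancherel on $\mathbb{R}^d$ applies). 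Thus
\[
\int_{\mathbb{R}^d}\big|\mathcal{D}_N(x+\tau\sigma\Omega)\big|^2\,dx
=\int_{\mathbb{R}^d}\big|\widehat{\chi_{\tau\sigma\Omega}}(\xi)\big|^2\,\Big|\sum_{j=1}^N e^{-2\pi i z_j\cdot\xi}-N\widehat{\mu}(\xi)\Big|^2\,d\xi .
\]
Integrating this over $\sigma\in SO(d)$ and $\tau\in[a,b]$ and discarding everything outside an annulus, it suffices to produce, for a suitably chosen annulus $\{R_0\le|\xi|\le R_1\}$ depending on $N$, a lower bound for the averaged multiplier $\int_a^b\int_{SO(d)}|\widehat{\chi_{\tau\sigma\Omega}}(\xi)|^2\,d\sigma\,d\tau$ on that annulus, together with a lower bound for the angular/radial average of $\big|\sum_j e^{-2\pi i z_j\cdot\xi}-N\widehat{\mu}(\xi)\big|^2$.

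For the first ingredient, the hypotheses (\ref{Diff simm da sopra}) and (\ref{Diff simm da sotto}) are exactly what controls $|\widehat{\chi_\Omega}(\xi)|$: from $\widehat{\chi_\Omega}(\xi)-e^{2\pi i h\cdot\xi}\widehat{\chi_\Omega}(\xi)=\widehat{\chi_\Omega-\chi_{h+\Omega}}(\xi)$ one gets $|1-e^{2\pi i h\cdot\xi}|\,|\widehat{\chi_\Omega}(\xi)|\le\|\chi_{h+\Omega}-\chi_\Omega\|_1=|(h+\Omega)\triangle\Omega|$, so (\ref{Diff simm da sopra}) yields the upper bound $|\widehat{\chi_\Omega}(\xi)|\lesssim|\xi|^{-\beta}$ for $|\xi|$ large (choosing $h$ of length $\sim|\xi|^{-1}$ in the direction of $\xi$). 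For the lower bound one argues as in the Cassels–Montgomery / Beck tradition: choosing $h=t_n\overline\Theta$ with $t_n\sim|\xi|^{-1}$ and using the reverse inequality together with $\|\chi_{h+\Omega}-\chi_\Omega\|_2^2=2\int|\widehat{\chi_\Omega}(\xi)|^2\big(1-\cos(2\pi h\cdot\xi)\big)\,d\xi$, the lower bound (\ref{Diff simm da sotto}) forces $\int_{|\xi|\sim|\xi_0|}|\widehat{\chi_\Omega}(\xi)|^2\,d\xi\gtrsim|\xi_0|^{d}\cdot|\xi_0|^{-2\beta}$ on average over dyadic-type scales; combined with the rotation average $\int_{SO(d)}|\widehat{\chi_{\sigma\Omega}}(\xi)|^2\,d\sigma$ this gives a genuine pointwise lower bound $\gtrsim|\xi|^{-2\beta}$ for the rotationally averaged multiplier on a positive-proportion subset of each annulus (the condition (\ref{non lacunare}) that $t_n$ be non-lacunary is what guarantees the scales $t_n^{-1}$ are not too sparse, so that no dyadic annulus is missed). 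This is the step I expect to be the main obstacle: turning the integrated hypotheses (\ref{Diff simm da sotto}) into a usable pointwise-in-$\xi$ (after averaging over $SO(d)$ and $\tau$) lower bound for $|\widehat{\chi_{\tau\sigma\Omega}}(\xi)|^2$ on a set of annular measure $\gtrsim R^{d}$ for $R\le|\xi|\le 2R$.

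For the second ingredient one invokes the Cassels–Montgomery lemma (as in \cite[Chapter 6]{montgomery}, \cite{BBG,BT22}): because $\mu$ satisfies the Ahlfors-type bound (\ref{Ahlfors}) with exponent $\alpha$, the measure $\mu$ is "spread out" enough that for $|\xi|$ in a range determined by $N$ one has, on average over the annulus,
\[
\int_{R\le|\xi|\le 2R}\Big|\sum_{j=1}^N e^{-2\pi i z_j\cdot\xi}-N\widehat{\mu}(\xi)\Big|^2\,d\xi\;\gtrsim\;N
\]
provided $R^{\alpha}\gtrsim N$ (roughly: if the annular scale is large compared to $N^{1/\alpha}$, the point masses cannot cancel the smoothed term, and each contributes $\sim 1$). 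Putting the two ingredients together, restrict the $\xi$-integral to the annulus $|\xi|\sim R$ with $R\asymp N^{1/\alpha}$ (the smallest scale at which Cassels–Montgomery kicks in); there the multiplier is $\gtrsim R^{-2\beta}\asymp N^{-\beta/\alpha}$ on a set of measure $\gtrsim R^d$, and the exponential-sum factor contributes $\gtrsim N$, so
\[
\int_a^b\!\!\int_{SO(d)}\!\!\int_{\mathbb{R}^d}\big|\mathcal{D}_N(x+\tau\sigma\Omega)\big|^2\,dx\,d\sigma\,d\tau
\;\gtrsim\;N^{-\beta/\alpha}\cdot N\;=\;N^{\,1-\beta/\alpha},
\]
and taking square roots gives (\ref{Discrep basso}). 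One should remember to check that the chosen $[a,b]$ and the implicit geometric constants depend only on $\Omega$, $\mu$, $\alpha$, $\beta$, $\kappa_1,\kappa_2,\kappa_3$ and not on $N$ or on $\mathcal{P}_N$, which is automatic from the above since $R$ was the only $N$-dependent quantity and it entered only through the scale of the annulus.
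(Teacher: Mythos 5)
Your overall strategy (Plancherel plus the factorization of $\widehat{\mathcal{D}}_N$, a lower bound for the averaged multiplier $\int\int|\widehat{\chi}_{\tau\sigma\Omega}(\xi)|^2\,d\sigma d\tau$, and a Cassels--Montgomery input for the exponential sum) is indeed the paper's strategy, but the step you yourself flag as the main obstacle is where the argument breaks, and the quantitative claims you make there are not correct. From (\ref{Diff simm da sopra}) and Plancherel one gets the two-sided \emph{annular} bound $\int_{\{\gamma\rho\leq|\xi|\leq\delta\rho\}}|\widehat{\chi}_\Omega(\xi)|^2d\xi\asymp\rho^{-\beta}$ (this is Lemma \ref{Lemma Leo}, part 1, and uses (\ref{Diff simm da sotto}) together with (\ref{non lacunare}) to control the low- and high-frequency tails); your claimed lower bound $\gtrsim|\xi_0|^{d-2\beta}$ for the annular mass is actually \emph{incompatible} with the upper bound $\rho^{-\beta}$ forced by (\ref{Diff simm da sopra}) whenever $\beta<d$, so the subsequent claim that the rotation-averaged multiplier is $\gtrsim|\xi|^{-2\beta}$ on a positive-proportion subset of each annulus cannot be right. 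The correct statement, and the one the proof needs, is that averaging over rotations \emph{and} over the dilation parameter $u\in[\gamma,\delta]$ upgrades the annular bound to a genuine pointwise two-sided estimate
\[
\int_{\gamma}^{\delta}\int_{SO(d)}\bigl|\widehat{\chi}_{u\sigma\Omega}(\xi)\bigr|^{2}\,du\,d\sigma\;\asymp\;|\xi|^{-d-\beta},
\]
valid for every large $|\xi|$ (Lemma \ref{Lemma Leo}, part 2); note the exponent is $-d-\beta$, not $-2\beta$.

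This matters because your combination step is not legitimate as written: a lower bound for the multiplier that holds only on an unspecified positive-proportion subset of the annulus cannot be multiplied against a lower bound for $\int_{R\leq|\xi|\leq 2R}|\sum_j e^{-2\pi i\xi\cdot z_j}-N\widehat{\mu}(\xi)|^2d\xi$, since the mass of the exponential sum could live entirely off that subset. The paper avoids this by having a pointwise multiplier bound, so it can take the infimum over $1\leq|\xi|\leq M$ and pull it out of the integral, and it uses the full Cassels--Montgomery bound $\int_{\{1\leq|\xi|\leq M\}}|\cdots|^2d\xi\geq c\,NM^{d}$ for $M\geq c_1N^{1/\alpha}$ (Lemma \ref{Lemma Cassels}, whose proof requires the smoothing kernel $K_M$ and the estimate $|K_M*\mu|\lesssim M^{d-\alpha}$ coming from (\ref{Ahlfors})), not merely $\gtrsim N$ on a single annulus. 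The correct bookkeeping is then $M^{-d-\beta}\cdot NM^{d}=NM^{-\beta}=N^{1-\beta/\alpha}$. With your stated bounds the exponents do not actually close: with $R\asymp N^{1/\alpha}$ one has $R^{-2\beta}\asymp N^{-2\beta/\alpha}$, not $N^{-\beta/\alpha}$, so your final display reaches the right exponent only through this algebraic slip; taken at face value your estimates would yield the weaker $N^{1-2\beta/\alpha}$. So the missing ingredient is precisely the rotation--dilation averaging lemma converting the $L^2$ consequences of (\ref{Diff simm da sopra})--(\ref{Diff simm da sotto}) into the pointwise bound $\asymp|\xi|^{-d-\beta}$, together with the ball (not annulus) form of Cassels--Montgomery.
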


In the next section we shall see that the above result is sharp.

If $E$ has positive Lebesgue measure and $\mu$ is the Lebesgue measure
restricted to $E$, then we can take $\alpha=d$. In this case the previous
result can be found in \cite[Theorem 2.10]{DT}.

\begin{remark}
\label{Remark Minkowski}The value of $\beta$ in (\ref{Diff simm da sopra}) is
related to the fractal dimension of the boundary $\partial\Omega$. It is not
difficult to show that%
\[
\left(  h+\Omega\right)  \bigtriangleup\Omega\subseteq\left\{  x\in
\mathbb{R}^{d}:\operatorname*{dist}\left(  x,\partial\Omega\right)
\leqslant\left\vert h\right\vert \right\}  .
\]
Recall that $\partial\Omega$ has finite $\left(  d-\beta\right)  $-dimensional
Minkowski content if for some $c>0$ and for every $0<t<1$ one has
\begin{equation}
\left\vert \left\{  z\in\mathbb{R}^{d}:\operatorname*{dist}\left(
z,\partial\Omega\right)  \leqslant t\right\}  \right\vert \leqslant ct^{\beta}
\label{Finite minkowski content}%
\end{equation}
(see e.g. \cite{Fal}). Hence (\ref{Finite minkowski content}) implies
(\ref{Diff simm da sopra}).
\end{remark}

The above theorem has an immediate corollary.

\begin{corollary}
\label{cor3} Let $\Omega$ be as in the previous theorem. Then there exists a
constant $c>0$ such that for every point distribution $\mathcal{P}%
_{N}=\left\{  z_{1},z_{2},\ldots z_{N}\right\}  $  there exists an affine
copy\ $\Omega^{\ast}=x^{\ast}+\tau^{\ast}\sigma^{\ast}\Omega$ \ of $\Omega$
such that%
\[
\left\vert \mathcal{D}_{N}\left(  \Omega^{\ast}\right)  \right\vert \geq
c\,N^{1/2-\beta/\left(  2\alpha\right)  }\ .
\]

\end{corollary}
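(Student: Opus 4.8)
The plan is to deduce Corollary \ref{cor3} from Theorem \ref{Thm 1} by a standard mean-value (pigeonhole) argument; the only point requiring a little care is that the $x$-integration in \eqref{Discrep basso} is over all of $\mathbb{R}^d$, so I first need to confine it to a fixed bounded set of finite measure.

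First I would observe that, since $E$ and $\Omega$ are bounded and $a\leqslant\tau\leqslant b$, there is a bounded set $\mathcal{B}\subset\mathbb{R}^{d}$ — depending only on $E$, $\Omega$, $a$, $b$, and in particular \emph{independent of $N$ and of $\mathcal{P}_{N}$} — such that $\left(x+\tau\sigma\Omega\right)\cap E=\emptyset$ whenever $x\notin\mathcal{B}$, $a\leqslant\tau\leqslant b$, $\sigma\in SO(d)$. Indeed, writing $\rho:=\sup_{\omega\in\Omega}\left\vert\omega\right\vert<\infty$, any point of $x+\tau\sigma\Omega$ lies within distance $b\rho$ of $x$, so $\left(x+\tau\sigma\Omega\right)\cap E\neq\emptyset$ forces $x\in\mathcal{B}:=\left\{x\in\mathbb{R}^{d}:\operatorname*{dist}\left(x,E\right)\leqslant b\rho\right\}$, which is bounded. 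For $x\notin\mathcal{B}$ the set $R=\left(x+\tau\sigma\Omega\right)\cap E$ is empty, hence $\mathcal{D}_{N}\left(x+\tau\sigma\Omega\right)=\operatorname{card}(\mathcal{P}_{N}\cap R)-N\mu(R)=0$. Therefore the integrand in \eqref{Discrep basso} is supported, in the variable $x$, inside $\mathcal{B}$, and the left-hand side of \eqref{Discrep basso} is unchanged if $\mathbb{R}^{d}$ is replaced by $\mathcal{B}$; the resulting domain $[a,b]\times SO(d)\times\mathcal{B}$ has finite total mass $M<\infty$ (with respect to $d\tau\,d\sigma\,dx$) which does not depend on $N$. (The integral makes sense because $x\mapsto\mathcal{D}_{N}\left(x+\tau\sigma\Omega\right)$ is measurable, being the difference of a piecewise-constant counting function and $x\mapsto N\mu\left(\left(x+\tau\sigma\Omega\right)\cap E\right)$; this is routine.)

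Next I would argue by contradiction. Suppose that for some point distribution $\mathcal{P}_{N}$ every affine copy $\Omega'=x+\tau\sigma\Omega$ with $a\leqslant\tau\leqslant b$, $\sigma\in SO(d)$ satisfied $\left\vert\mathcal{D}_{N}(\Omega')\right\vert<c'\,N^{1/2-\beta/(2\alpha)}$. Then
\[
\int_{a}^{b}\int_{SO(d)}\int_{\mathbb{R}^{d}}\left\vert\mathcal{D}_{N}\left(x+\tau\sigma\Omega\right)\right\vert^{2}\,dx\,d\sigma\,d\tau\leqslant (c')^{2}\,M\,N^{1-\beta/\alpha},
\]
which contradicts the lower bound \eqref{Discrep basso} of Theorem \ref{Thm 1} as soon as $(c')^{2}M<c^{2}$. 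Hence the corollary holds with, say, $c'=c/(2\sqrt{M})$ — indeed with any constant strictly smaller than $c/\sqrt{M}$ — and the asserted affine copy $\Omega^{\ast}=x^{\ast}+\tau^{\ast}\sigma^{\ast}\Omega$ exists.

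The only genuine obstacle is the first step: verifying that the effective integration domain can be taken of finite measure bounded independently of $N$ and of $\mathcal{P}_{N}$. Once that is in place, the conclusion is the one-line pigeonhole estimate above, and no Fourier-analytic input beyond Theorem \ref{Thm 1} is required.
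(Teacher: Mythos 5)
Your argument is correct and is exactly the (implicit) argument the paper intends: since $E$, $\Omega$ are bounded and $\tau\in[a,b]$, the discrepancy has compact support in $x$ uniformly in $N$, so the $L^{2}$ lower bound of Theorem \ref{Thm 1} yields the pointwise bound by a Chebyshev/pigeonhole step, which is why the paper calls the corollary immediate. No gap; the constant bookkeeping $c'=c/(2\sqrt{M})$ is fine.
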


\subsection{Examples}

\label{sect esempi}

We now discuss a few examples of $\mu$, $E$ and $\Omega$.

\begin{example}
It is easy to see that if $\Omega$ is a convex body, then
(\ref{Diff simm da sopra}) and (\ref{Diff simm da sotto}) hold with $\beta=1$.
We will show in the Example \ref{n gamma} that for every $0<\beta<1$ there are
sets $\Omega$ satisfying (\ref{Diff simm da sopra}) and
(\ref{Diff simm da sotto}).
\end{example}

\begin{example}
As already mentioned in the Introduction, the study of the discrepancy
associated to a point distribution on a given manifold (not necessarily
smooth) embedded in $\mathbb{R}^{d}$ is one of the main motivation of this
work. Then we consider a metric space $Y$ (with distance $\operatorname*{dist}%
$) embedded in $\mathbb{R}^{d}$ by a mapping $\Phi:Y\rightarrow\mathbb{R}^{d}$
such that%
\[
C_{1}\operatorname*{dist}\left(  y_{1},y_{2}\right)  \leqslant\left\vert
\Phi\left(  y_{1}\right)  -\Phi\left(  y_{2}\right)  \right\vert .
\]
Let $\nu$ be a $\alpha$-dimensional probability measure on $Y$, that is,%
\[
\nu\left(  B_{r}\left(  y\right)  \right)  \leqslant cr^{\alpha},
\]
where $B_{r}\left(  y\right)  $ denotes the ball centered at $y$ with radius
$r$ in the metric space $Y$. For every Borel set $F\subseteq\mathbb{R}^{d}$
define%
\[
\mu\left(  F\right)  =\nu\left(  \Phi^{-1}\left(  F\right)  \right)  .
\]
Let $x_{0}\in\mathbb{R}^{d}$. If $x_{0}\in\Phi\left(  Y\right)  $, then%
\[
\Phi^{-1}\left(  B_{r}\left(  x_{0}\right)  \right)  \subseteq B_{r/C_{1}%
}\left(  \Phi^{-1}\left(  x_{0}\right)  \right)
\]
and therefore%
\[
\mu\left(  B_{r}\left(  x_{0}\right)  \right)  \leqslant\nu\left(  B_{r/C_{1}%
}\left(  \Phi^{-1}\left(  x_{0}\right)  \right)  \right)  \leqslant c\left(
r/C_{1}\right)  ^{\alpha}\leqslant c_{2}r^{\alpha}.
\]
If $x_{0}\notin\Phi\left(  Y\right)  $ but $B_{r}\left(  x_{0}\right)
\cap\Phi\left(  Y\right)  \neq\emptyset$, then there exists $z\in\Phi\left(
Y\right)  $ such that $\left\vert x_{0}-z\right\vert <r$ and therefore%
\[
\mu\left(  B_{r}\left(  x_{0}\right)  \right)  \leqslant\mu\left(
B_{2r}\left(  z\right)  \right)  \leqslant\nu\left(  B_{2r/C_{1}}\left(
\Phi^{-1}\left(  z\right)  \right)  \right)  \leqslant c_{2}r^{\alpha}.
\]
Finally if $B_{r}\left(  x_{0}\right)  \cap\Phi\left(  Y\right)  =\emptyset$
then $\mu\left(  B_{r}\left(  x_{0}\right)  \right)  =0$. Hence $\mu$ is a
$\alpha$-dimensional measure in $\mathbb{R}^{d}$. If $\Omega$ is a given
convex body, then Theorem \ref{Thm 1} gives the estimate%
\begin{equation}
\left\{  \int_{a}^{b}\int_{SO\left(  d\right)  }\int_{\mathbb{R}^{d}%
}\left\vert \mathcal{D}_{N}\left(  x+\tau\sigma\Omega\right)  \right\vert
^{2}dxd\sigma d\tau\right\}  ^{1/2}\geqslant c\,N^{1/2-1/\left(
2\alpha\right)  }. \label{Discrepanza}%
\end{equation}

\end{example}

As a particular case of the previous example we have the following result.

\begin{corollary}
Let $\mu$ be the surface measure on a regular $k$-dimensional surface $E$
embedded in $\mathbb{R}^{d}$ and let $\Omega$ be the unit ball in
$\mathbb{R}^{d}$. Then%
\begin{equation}
\left\{  \int_{a}^{b}\int_{\mathbb{R}^{d}}\left\vert \mathcal{D}_{N}\left(
x+\tau\Omega\right)  \right\vert ^{2}dxd\tau\right\}  ^{1/2}\geqslant
c\,N^{1/2-1/(2k)}.\label{k-dim}%
\end{equation}

\end{corollary}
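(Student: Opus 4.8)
The plan is to obtain the statement as a direct specialization of the preceding Example combined with Theorem~\ref{Thm 1}. First I would recast $E$ in the framework of that Example. A (compact) regular $k$-dimensional surface $E\subset\mathbb{R}^{d}$ has the property that the ambient chord distance on $E$ is comparable to the intrinsic distance, so we may take $Y=E$ equipped with its intrinsic metric, $\Phi\colon Y\rightarrow\mathbb{R}^{d}$ the inclusion — for which the requirement $C_{1}\operatorname{dist}(y_{1},y_{2})\leqslant|\Phi(y_{1})-\Phi(y_{2})|$ holds — and $\nu=\mu$ the surface measure, normalized so that $\mu(E)=1$ (the constants below being allowed to depend on $E$). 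Local flatness together with compactness shows that $\nu$ is a $k$-dimensional measure on $Y$, i.e. intrinsic balls of radius $r$ have surface measure $O(r^{k})$ uniformly. By the computation carried out in the Example this gives that $\mu$ satisfies \eqref{Ahlfors} on $\mathbb{R}^{d}$ with $\alpha=k$.

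Next, since $\Omega$ is the unit ball of $\mathbb{R}^{d}$ it is a convex body, so (as noted in the first Example above) the hypotheses \eqref{Diff simm da sopra} and \eqref{Diff simm da sotto} hold with $\beta=1$. Thus all the hypotheses of Theorem~\ref{Thm 1} are met with $\alpha=k$ and $\beta=1$; since $1/2-\beta/(2\alpha)=1/2-1/(2k)$, the theorem yields
\[
\left\{  \int_{a}^{b}\int_{SO\left(  d\right)  }\int_{\mathbb{R}^{d}
}\left\vert \mathcal{D}_{N}\left(  x+\tau\sigma\Omega\right)  \right\vert
^{2}dxd\sigma d\tau\right\}  ^{1/2}\geqslant c\,N^{1/2-1/\left(  2k\right)  }.
\]

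Finally I would dispose of the rotation average. Because $\Omega$ is a ball, $\sigma\Omega=\Omega$ for every $\sigma\in SO(d)$, so $\mathcal{D}_{N}(x+\tau\sigma\Omega)=\mathcal{D}_{N}(x+\tau\Omega)$ is independent of $\sigma$; integrating against the normalized Haar measure of $SO(d)$ therefore leaves the left-hand side unchanged, and \eqref{k-dim} follows.

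The one delicate point is the uniform validity of the Ahlfors estimate \eqref{Ahlfors}: it must hold for every ball $B(x,r)$ with $x\in\mathbb{R}^{d}$ and $r>0$ arbitrary, not merely for small balls centered on $E$. This is precisely the issue already handled in the Example — compactness of $E$ furnishes a single constant controlling the local geometry, a ball meeting $E$ but centered off it is compared to a concentric ball of twice the radius centered at a point of $E$, and balls disjoint from $E$ contribute nothing — so no argument beyond that of the Example is needed here.
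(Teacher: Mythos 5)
Your proposal is correct and follows essentially the same route as the paper, which presents this corollary precisely as a specialization of the preceding Example (with $\alpha=k$ via the embedding argument and $\beta=1$ since the ball is convex), the $SO(d)$ average disappearing by rotation invariance of $\Omega$ exactly as you observe. Your extra remark on the uniform validity of (\ref{Ahlfors}) for balls not centered on $E$ is indeed already covered by the Example's case analysis, so nothing further is needed.
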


When $E$ is a $k$-dimensional sphere the above corollary gives an estimate of
the spherical cap discrepancy and we obtain the result in section 7.4 in
\cite{BC}. Observe that for $k=1$ the estimate (\ref{k-dim}) does not give a
divergent lower bound. Indeed, the example of $N$ equally spaced points on a
one dimensional circle in $\mathbb{R}^{d}$ shows that the dicrepancy can be
bounded. If $\Omega$ is a ball, $\mathcal{P}_{N}$ are $N$ equispaced points on
a one dimensional circle of length $1$, and $\mu$ is the natural measure on
$E$, then $\left(  x+\tau\Omega\right)  \cap E$ is an arc and $\left\vert
\mathcal{D}_{N}\left(  x+\tau\Omega\right)  \right\vert \leqslant1$. We
observe that Theorem \ref{Thm 1} shows that when $\alpha>1$ this phenomenon disappears.

\begin{example}
\label{example snowflake} The following iterative construction of a measure on
the snowflake curve is a classical example of an $\alpha$-dimensional measure
with $\alpha$ not an integer. Let $C_{0}$ be the boundary of an equilateral
triangle with a horizontal side and side length equal to $1$. At the stage $n$
of the construction $C_{n}$ contains $3\cdot4^{n}$ segments of length $3^{-n}$
and we construct $C_{n+1}$ replacing the middle third of every segment by the
other two sides of an "external" equilateral triangle. Let $\mu_{n}$ be the
probability measure that assignes the measure $\left(  3\cdot4^{n}\right)
^{-1}$ to every side in $C_{n}$. As $n$ goes to infinity these polygonal
curves $C_{n}$ approach the snowflake curve $C$ and the measure $\mu_{n}$
converges to a measure $\mu$. It is well known that $C$ has Hausdorff
dimension $\log_{3}4$ and that $\mu$ is, up to a normalization, the $\log
_{3}4$-dimensional Hausdorff measure restricted to $C$ and satisfies
(\ref{Ahlfors}) with $\alpha=\log_{3}4$. See \cite[Example 9.5]{Fal},
\cite{SS}, and Theorem 4.14 in \cite{Mattila1}. Hence, if $E=C$ and $\Omega$
is any convex body,%
\[
\left\{  \int_{a}^{b}\int_{SO\left(  d\right)  }\int_{\mathbb{R}^{d}%
}\left\vert \mathcal{D}_{N}\left(  x+\tau\sigma\Omega\right)  \right\vert
^{2}dxd\sigma d\tau\right\}  ^{1/2}\geqslant c\,N^{1/2-1/\left(  2\log
_{3}4\right)  }.
\]

\end{example}

\begin{example}
Let now $E$ be the bounded set such that $\partial E$ is the above snowflake
curve $C$, let $\mu$ be the $2$-dimensional Lebesgue measure and let
$\Omega=E$. We shall see in the Appendix that (\ref{Diff simm da sopra}) and
(\ref{Diff simm da sotto}) hold with $\beta=2-\log_{3}4$. Then Theorem
\ref{Thm 1} gives the estimate%
\[
\left\{  \int_{a}^{b}\int_{SO\left(  d\right)  }\int_{\mathbb{R}^{d}%
}\left\vert \mathcal{D}_{N}\left(  x+\tau\sigma\Omega\right)  \right\vert
^{2}dxd\sigma d\tau\right\}  ^{1/2}\geqslant c\,N^{\left(  \log_{3}2\right)
/2}.
\]

\end{example}

\subsection{Half-space discrepancy}

We now consider the half-space discrepancy. For $\rho\in\mathbb{R}$ and
$\Theta\in\Sigma_{d-1}$ (the unit $\left(  d-1\right)  $-dimensional sphere)
we consider the affine half-space%
\[
\Pi\left(  \rho,\Theta\right)  =\left\{  x\in\mathbb{R}^{d}:x\cdot\Theta
>\rho\right\}
\]
and, for every point distribution $\mathcal{P}_{N}=\left\{  z_{1},z_{2},\ldots
z_{N}\right\}  $, the associated discrepancy%
\begin{equation}
\mathcal{D}_{N}\left(  \Pi\left(  \rho,\Theta\right)  \right)  =\sum_{j=1}%
^{N}\chi_{\Pi\left(  \rho,\Theta\right)  }\left(  z_{j}\right)  -N\mu\left(
\Pi\left(  \rho,\Theta\right)  \right)  .\label{Discrep semispazi}%
\end{equation}
We have the following results.

\begin{theorem}
\label{Thm 3}Let $\mu$ be a positive Borel measure on $\mathbb{R}^{d}$ which
satisfies (\ref{Ahlfors}) for a given $0<\alpha\leqslant d$. Assume that the
support $E$ of $\mu$ is contained in a ball $B\left(  0,r_{0}\right)  $. Then
there exists $c>0$ such that for every point distribution $\mathcal{P}%
_{N}=\left\{  z_{1},z_{2},\ldots z_{N}\right\}  $ contained in $B\left(
0,r_{0}\right)  $ we have%
\[
\left\{  \int_{0}^{+\infty}\int_{\Sigma_{d-1}}\left\vert \mathcal{D}%
_{N}\left(  \Pi\left(  \rho,\Theta\right)  \right)  \right\vert ^{2}d\Theta
d\rho\right\}  ^{1/2}\geqslant c\,N^{1/2-1/\left(  2\alpha\right)  }.
\]

\end{theorem}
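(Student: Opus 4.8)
The plan is to realise the half-spaces as limits of large Euclidean balls and to run, directly on slices, the Fourier mechanism behind Theorem~\ref{Thm 1}. Put $d\nu_N=\sum_{j=1}^{N}\delta_{z_j}-N\mu$, a compactly supported real signed measure with $\widehat{d\nu_N}(0)=\mathrm{card}(\mathcal{P}_N)-N=0$, so that $\mathcal{D}_N(\Pi(\rho,\Theta))=d\nu_N(\Pi(\rho,\Theta))$. First I would reduce the range of $\rho$: since $\Pi(-\rho,-\Theta)$ is the complement of the closed half-space $\{x\cdot\Theta\ge\rho\}$ and $\mu(\mathbb{R}^d)=1$ with all $z_j\in E$, one has $\mathcal{D}_N(\Pi(-\rho,-\Theta))=-\mathcal{D}_N(\Pi(\rho,\Theta))$ for all but countably many $\rho$; replacing $(\rho,\Theta)$ by $(-\rho,-\Theta)$ then gives
\[
\int_{0}^{+\infty}\int_{\Sigma_{d-1}}|\mathcal{D}_N(\Pi(\rho,\Theta))|^2\,d\Theta\,d\rho=\tfrac12\int_{\mathbb{R}}\int_{\Sigma_{d-1}}|\mathcal{D}_N(\Pi(\rho,\Theta))|^2\,d\Theta\,d\rho.
\]

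Next I would establish a Plancherel identity for the right-hand side. Fix $\Theta$. The function $\rho\mapsto\mathcal{D}_N(\Pi(\rho,\Theta))$ is bounded by $2N$, supported in $[-r_0,r_0]$, and equals the tail $\rho\mapsto\lambda_\Theta((\rho,+\infty))$ of the pushforward $\lambda_\Theta=(\pi_\Theta)_*(d\nu_N)$ of $d\nu_N$ under $\pi_\Theta(x)=x\cdot\Theta$; this $\lambda_\Theta$ has total mass $0$ and Fourier transform $s\mapsto\widehat{d\nu_N}(s\Theta)$, which vanishes at $s=0$. Hence the Fourier transform of $\mathcal{D}_N(\Pi(\cdot,\Theta))$ equals $-\widehat{d\nu_N}(s\Theta)/(2\pi i s)$, an honest function (the singularity at $0$ being removable), and Plancherel gives $\int_{\mathbb{R}}|\mathcal{D}_N(\Pi(\rho,\Theta))|^2\,d\rho=\frac{1}{4\pi^2}\int_{\mathbb{R}}s^{-2}|\widehat{d\nu_N}(s\Theta)|^2\,ds$. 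Integrating in $\Theta\in\Sigma_{d-1}$ and passing to polar coordinates (using that $|\widehat{d\nu_N}|^2$ is even) yields
\[
\int_{\mathbb{R}}\int_{\Sigma_{d-1}}|\mathcal{D}_N(\Pi(\rho,\Theta))|^2\,d\Theta\,d\rho=\frac{1}{2\pi^2}\int_{\mathbb{R}^d}\frac{|\widehat{d\nu_N}(\xi)|^2}{|\xi|^{d+1}}\,d\xi.
\]
The weight $|\xi|^{-(d+1)}$ here is exactly the normalized $R\to+\infty$ limit of the radial weight that the proof of Theorem~\ref{Thm 1} attaches to a ball of radius $R$, namely $\int_a^b|\widehat{\chi_{B(0,\tau R)}}(\xi)|^2\,d\tau\asymp R^{d-1}|\xi|^{-(d+1)}$ for $|\xi|\gtrsim1/R$, so this identity is the ``diverging radii'' computation in disguise; the hypothesis $E\subseteq B(0,r_0)$ is what makes a large ball meeting $E$ indistinguishable, up to a slab of width $O(1/R)$, from a half-space.

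Finally I would bound the last integral from below by the Cassels--Montgomery lemma. In the form valid for measures satisfying (\ref{Ahlfors}) (cf.\ \cite[Chapter 6]{montgomery}, \cite{BBG,BT22}) it provides constants $c,c'>0$, depending only on $d$, $\alpha$ and the constant in (\ref{Ahlfors}), such that $\int_{|\xi|\le T}|\widehat{d\nu_N}(\xi)|^2\,d\xi\ge c\,N\,T^{d}$ whenever $T\ge c'N^{1/\alpha}$. Taking $T=c'N^{1/\alpha}$ and using $|\xi|^{-(d+1)}\ge T^{-(d+1)}$ on $\{|\xi|\le T\}$, one gets $\int_{\mathbb{R}^d}|\widehat{d\nu_N}(\xi)|^2|\xi|^{-(d+1)}\,d\xi\ge c'' N T^{-1}\asymp N^{1-1/\alpha}$; combined with the two displayed identities and a square root this is the asserted bound $c\,N^{1/2-1/(2\alpha)}$. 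The step I expect to demand the most care is this last one: the classical Cassels--Montgomery lemma is stated for sums over a lattice, so one needs its continuous ``ball'' version with the admissible range $T\gtrsim N^{1/\alpha}$ and constants uniform over point configurations, which calls either for a short separate argument or for a precise reference. A technically heavier but conceptually more transparent alternative is to quote Theorem~\ref{Thm 1} verbatim for $\Omega=B(0,R)$ and let $R\to+\infty$; then the work shifts to (i) checking that its constant for a ball of radius $R$ is $\asymp R^{(d-1)/2}$ with $a,b$ fixed, and (ii) showing that the error in replacing $B((\tau R+\rho)\Theta,\tau R)\cap B(0,r_0)$ by $\Pi(\rho,\Theta)\cap B(0,r_0)$, which is controlled by $N^2\int_{\Sigma_{d-1}}\int_{-r_0}^{r_0}\mu(\{x:|x\cdot\Theta-\rho|\le C/R\})^2\,d\rho\,d\Theta$, tends to $0$ as $R\to+\infty$ --- which holds by dominated convergence, since for each fixed $\Theta$ the measure $(\pi_\Theta)_*\mu$ has only countably many atoms.
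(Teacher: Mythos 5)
Your argument is correct, and it reaches the conclusion by a genuinely different route from the paper. The paper realizes half-spaces as limits of balls: it proves a Bessel-function lower bound for $\frac1R\int_R^{2R}\vert\widehat{\chi}_{rB}(\xi)\vert^2dr$ (Lemma \ref{Da sotto Ball}), applies Plancherel in the translation variable together with Lemma \ref{Lemma Cassels} to get $\frac1R\int_R^{2R}\int_{\mathbb{R}^d}\vert\mathcal{D}_N(x,r)\vert^2dxdr\geqslant cR^{d-1}N^{1-1/\alpha}$, localizes the discrepancy to the spherical shell $r-r_0\leqslant\vert x\vert\leqslant r+r_0$, and finally lets $R\to+\infty$ with dominated convergence to identify the limit with the half-space discrepancy. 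You instead bypass the limiting procedure entirely: after the (legitimate) symmetry reduction in $\rho$, you observe that for fixed $\Theta$ the function $\rho\mapsto\mathcal{D}_N(\Pi(\rho,\Theta))$ is the tail of the pushforward of $\sum_j\delta_{z_j}-N\mu$ under $x\mapsto x\cdot\Theta$, so its one-dimensional Fourier transform is $-\widehat{\nu}_N(s\Theta)/(2\pi i s)$, and a slice-by-slice Plancherel plus polar coordinates gives the exact identity $\int_{\mathbb{R}}\int_{\Sigma_{d-1}}\vert\mathcal{D}_N(\Pi(\rho,\Theta))\vert^2d\Theta d\rho=\frac{1}{2\pi^2}\int_{\mathbb{R}^d}\vert\widehat{\nu}_N(\xi)\vert^2\vert\xi\vert^{-d-1}d\xi$; the lower bound then follows at once from the Cassels--Montgomery estimate with $M\approx N^{1/\alpha}$, since the weight $\vert\xi\vert^{-d-1}$ is at least $M^{-d-1}$ on the region where that estimate lives. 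The step you flag as delicate is not a gap here: the continuous, measure-theoretic Cassels--Montgomery inequality you need, with constants depending only on $d$, $\alpha$ and the constant in (\ref{Ahlfors}) and valid on the annulus $1\leqslant\vert\xi\vert\leqslant M$ for $M\geqslant c_1N^{1/\alpha}$, is exactly Lemma \ref{Lemma Cassels} of the paper (built on Lemma \ref{Lemma alfa palla}), and restricting to that annulus only strengthens your pointwise bound on the weight. What the two approaches buy: yours is shorter, avoids Bessel asymptotics and the dominated-convergence limit, and exhibits the half-space $L^2$ discrepancy as an exact weighted $L^2$ norm of the exponential-sum deviation; the paper's route is heavier but deliberately derives the half-space case from the same machinery as Theorem \ref{Thm 1}, displaying half-spaces as the $\tau\to\infty$ degeneration of the families $\mathcal{R}_\Omega$, which is the unifying theme of the article.
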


In the next section we shall see that the above result is sharp.

Since the discrepancy $\mathcal{D}_{N}\left(  \Pi\left(  \rho,\Theta\right)
\right)  $ has compact support as a function of $\rho$ and $\Theta$, one has
the following corollary.

\begin{corollary}
\label{cor5} There exists a constant $c>0$ such that for every point
distribution $\mathcal{P}_{N}=\left\{  z_{1},z_{2},\ldots z_{N}\right\}  $
there exist a half-space \ $\Pi^{\ast}=\Pi\left(  t^{\ast},\Theta^{\ast
}\right)  $ such that
\[
\left\vert \mathcal{D}_{N}\left(  \Pi^{\ast}\right)  \right\vert \geqslant
c\,N^{1/2-1/\left(  2\alpha\right)  }.
\]

\end{corollary}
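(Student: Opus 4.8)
The plan is to obtain Corollary \ref{cor5} from Theorem \ref{Thm 3} by the standard comparison between an $L^{2}$ average and a supremum, the decisive point being that the domain over which we integrate carries its entire mass on a bounded region. First I would record the compact-support property mentioned just before the statement. Since both the support $E$ of $\mu$ and the points $z_{1},\ldots,z_{N}$ lie in $B\left(0,r_{0}\right)$, the integrand in Theorem \ref{Thm 3} vanishes as soon as $\rho\geqslant r_{0}$: indeed for such $\rho$ one has $x\cdot\Theta\leqslant\left\vert x\right\vert \leqslant r_{0}\leqslant\rho$ for every $x\in B\left(0,r_{0}\right)$, so $\Pi\left(\rho,\Theta\right)$ meets neither $E$ nor $\mathcal{P}_{N}$; hence $\mu\left(\Pi\left(\rho,\Theta\right)\right)=0$ and $\mathcal{D}_{N}\left(\Pi\left(\rho,\Theta\right)\right)=0$. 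Together with the compactness of $\Sigma_{d-1}$, this confines the whole integral to the bounded set $\left[0,r_{0}\right]\times\Sigma_{d-1}$.

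Next I would bound the integral trivially from above by the measure of that region times the supremum of the integrand. Writing $\omega_{d-1}$ for the surface measure of $\Sigma_{d-1}$ and setting $M_{N}=\sup_{\rho\geqslant0,\ \Theta\in\Sigma_{d-1}}\left\vert \mathcal{D}_{N}\left(\Pi\left(\rho,\Theta\right)\right)\right\vert$, I get
\[
\int_{0}^{+\infty}\int_{\Sigma_{d-1}}\left\vert \mathcal{D}_{N}\left(\Pi\left(\rho,\Theta\right)\right)\right\vert ^{2}d\Theta\,d\rho=\int_{0}^{r_{0}}\int_{\Sigma_{d-1}}\left\vert \mathcal{D}_{N}\left(\Pi\left(\rho,\Theta\right)\right)\right\vert ^{2}d\Theta\,d\rho\leqslant r_{0}\,\omega_{d-1}\,M_{N}^{2}.
\]
Squaring the conclusion of Theorem \ref{Thm 3} and chaining these inequalities yields $r_{0}\,\omega_{d-1}\,M_{N}^{2}\geqslant c^{2}N^{1-1/\alpha}$, that is,
\[
M_{N}\geqslant c\,\left(r_{0}\,\omega_{d-1}\right)^{-1/2}N^{1/2-1/\left(2\alpha\right)},
\]
a lower bound for the supremum with a new constant depending only on $d$, $\alpha$, $r_{0}$ and the constant of Theorem \ref{Thm 3}.

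Finally I would pass from the supremum to a single half-space. By the definition of $M_{N}$ there exist parameters $\left(t^{\ast},\Theta^{\ast}\right)$ with $\left\vert \mathcal{D}_{N}\left(\Pi\left(t^{\ast},\Theta^{\ast}\right)\right)\right\vert \geqslant\tfrac{1}{2}M_{N}$ (no appeal to attainment of the supremum is needed), and putting $\Pi^{\ast}=\Pi\left(t^{\ast},\Theta^{\ast}\right)$ gives $\left\vert \mathcal{D}_{N}\left(\Pi^{\ast}\right)\right\vert \geqslant c\,\left(4r_{0}\,\omega_{d-1}\right)^{-1/2}N^{1/2-1/\left(2\alpha\right)}$, which is the assertion. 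I do not expect a genuine obstacle: the only substantive input is Theorem \ref{Thm 3}, and the remainder is a soft $L^{2}\to L^{\infty}$ deduction. The single point requiring care is the verification of compact support in $\rho$, since without it the bound by (finite measure)$\times$(supremum) would be vacuous, the full parameter set $\left[0,+\infty\right)\times\Sigma_{d-1}$ having infinite measure.
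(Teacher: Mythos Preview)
Your argument is correct and is exactly the deduction the paper has in mind: the sentence preceding the corollary states that $\mathcal{D}_{N}\left(\Pi\left(\rho,\Theta\right)\right)$ has compact support in $\left(\rho,\Theta\right)$, and you have simply written out the standard $L^{2}\to L^{\infty}$ passage that this remark is pointing to. There is nothing to add.
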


The study of the half-space discrepancy goes back to a problem raised by K.
Roth for the unit disc and by P. Erd\H{o}s for the sphere. See \cite[Ch.
7.3]{BC}, \cite{chen},\cite[Ch. 3.2]{chenLN}, \cite{Erdos},\cite[p.124-125]%
{SchmidtNotes}.\medskip

If $E=\left[  0,1\right]  ^{d}$ or $E$ is the unit ball, and $\mu$ is the
Lebesgue measure, then a slightly weaker version Corollary \ref{cor5} has been
proved by J. Beck \cite{Beck0} using Fourier analysis. The sharp estimate in
Theorem \ref{Thm 3} was first proved by R. Alexander \cite{Alexander} using an
integral geometric approach (see also \cite{CMS}). We now show that it is
possible to obtain the sharp result using Fourier analysis. More precisely we
use Theorem \ref{Thm 1} to obtain the half-space discrepancy as a limit of the
discrepancies of balls with diverging radii.

If $E\subset\mathbb{R}^{d}$ is a manifold and $\mu$ is the Hausdorff measure
on $E$, then Theorem \ref{Thm 3} gives a lower bound for the discrepancy of
the \textit{spherical caps}, that are the intersections of $E$ and affine half-spaces.

If $E$ is an Euclidean sphere, then the spherical caps coincide with the
intersections of $E$ and balls. Then Corollary \ref{cor3} and Corollary
\ref{cor5} coincide and have been proved by J. Beck (see \cite[Theorem
24C]{BC}).

If $E$ is a compact set in $\mathbb{R}^{d}$ of Hausdorff dimension $\alpha$,
$\mu$ is the associated Hausdorff measure and (\ref{Ahlfors}) holds true, then
Corollary \ref{cor5} has been proved by H. Albrecher, J. Matousek and R. Tichy
(see \cite{AMT}) by adapting the technique of R. Alexander to the fractal
setting. See the remark at the end of page 244 in \cite{AMT}.

\section{Upper bounds}

The following theorem shows that Theorem \ref{Thm 1} is sharp when $\mu$ is
the Lebesgue measure, $\alpha=d$ and $\partial\Omega$ has finite $\left(
d-\beta\right)  $-dimensional Minkowski content (see Remark
\ref{Remark Minkowski}).

The discussion in Section 2.2 in \cite{AMT} shows that also the lower bound in
Theorem \ref{Thm 3} is \textquotedblleft best possible\textquotedblright.

\begin{theorem}
Let $E$ be a bounded Borel set in $\mathbb{R}^{d}$ with positive Lebesgue
measure and for every measurable set $F$ let
\[
\mu\left(  F\right)  =\frac{\left\vert E\cap F\right\vert }{\left\vert
E\right\vert }%
\]
be the Lebesgue measure restricted and normalized to $E$. Assume there exists
$c_{1}>0$ such that for every $0<r<\operatorname{diam}\left(  E\right)  $ and
every $x\in E$, we have%
\begin{equation}
c_{1}r^{d}\leqslant\mu\left(  B\left(  x,r\right)  \right)
\label{Alfhors regular}%
\end{equation}
(of course we have $\mu\left(  B\left(  x,r\right)  \right)  \leqslant
c_{2}r^{d}$). Let $\Omega\subseteq\mathbb{R}^{d}$ be a given bounded Borel set
that satisfies (\ref{Finite minkowski content}) for some $0<\beta\leqslant1$
and let $0<a<b$. Then there exists $c>0$ such that for every $N>0$ there
exists a distribution of $N$ points $\mathcal{P}_{N}=\left\{  z_{1}%
,z_{2},\ldots,z_{N}\right\}  \subset E$ such that%
\begin{equation}
\left\{  \int_{a}^{b}\int_{SO\left(  d\right)  }\int_{\mathbb{R}^{d}%
}\left\vert \mathcal{D}_{N}\left(  x+\tau\sigma\Omega\right)  \right\vert
^{2}dxd\sigma d\tau\right\}  ^{1/2}\leqslant cN^{1/2-\beta/\left(  2d\right)
}.\label{Discrep alto}%
\end{equation}

\end{theorem}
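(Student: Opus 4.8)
The plan is to build $\mathcal P_N$ by \emph{jittered (stratified) sampling} and to estimate the \emph{expected} value of the left-hand side of (\ref{Discrep alto}), so that a suitable realisation does the job. Since $\mathcal P_N\subset E$ and $\operatorname{supp}\mu=E$, for every $x,\tau,\sigma$ one has $\mathcal D_N(x+\tau\sigma\Omega)=\operatorname{card}\big(\mathcal P_N\cap(x+\tau\sigma\Omega)\big)-N\mu(x+\tau\sigma\Omega)$; write $R=R(x,\tau,\sigma)=x+\tau\sigma\Omega$. The only structural input is a partition $E=\bigcup_{j=1}^{N}E_j$ into Borel sets with $\mu(E_j)=1/N$ and $\operatorname{diam}(E_j)\le c_3N^{-1/d}$. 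Such equal-mass, small-diameter partitions of $d$-regular sets are standard: the upper bound (\ref{Ahlfors}) (with $\alpha=d$) controls the mass of the cells of a fine cube grid covering $E$, while the \emph{lower} bound (\ref{Alfhors regular}), which forbids $E$ from being locally thin, is exactly what lets one regroup those cells into $N$ pieces of mass exactly $1/N$ without losing the diameter bound $O(N^{-1/d})$. Given the partition, pick the $z_j$ independently, with $z_j$ distributed as $N\mu$ restricted to $E_j$ (a probability measure, as $\mu(E_j)=1/N$), and set $\mathcal P_N=\{z_1,\dots,z_N\}$.

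For fixed $R$, $\mathcal D_N(R)=\sum_{j=1}^{N}X_j$ with $X_j=\chi_R(z_j)-N\mu(R\cap E_j)$; since $\sum_j N\mu(R\cap E_j)=N\mu(R)$ this is an identity, the $X_j$ are independent, and $\mathbb E X_j=\mathbb E\chi_R(z_j)-N\mu(R\cap E_j)=0$ because $\mathbb E\chi_R(z_j)=N\mu(R\cap E_j)$. Hence $\mathbb E|\mathcal D_N(R)|^2=\sum_j\operatorname{Var}\big(\chi_R(z_j)\big)\le\tfrac14\,\#\{j:0<\mu(R\cap E_j)<\mu(E_j)\}$. If the $j$-th term is nonzero, $E_j$ meets both $R$ and $R^{c}$; joining a point of $E_j\cap R$ to a point of $E_j\setminus R$ by a segment and using $\overline R\cup\overline{R^{c}}=\mathbb R^d$, that segment meets $\partial R=x+\tau\sigma(\partial\Omega)$, so any fixed $y_j\in E_j$ satisfies $\operatorname{dist}\big(y_j,\,x+\tau\sigma\partial\Omega\big)\le 2\operatorname{diam}(E_j)\le c_4N^{-1/d}$. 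Therefore
\[
\mathbb E|\mathcal D_N(R)|^2\le\tfrac14\,\#\big\{\,j:\operatorname{dist}(y_j,\,x+\tau\sigma\partial\Omega)\le c_4N^{-1/d}\,\big\}.
\]

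Next, integrate this over $x\in\mathbb R^d$, $\sigma\in SO(d)$, $\tau\in[a,b]$ and exchange the sum with the integral; it suffices to bound, for each $j$, the measure of the set of $(x,\sigma,\tau)$ satisfying the distance condition. Freezing $\sigma,\tau$ and writing $A_\varepsilon=\{z:\operatorname{dist}(z,A)\le\varepsilon\}$, the condition reads $x\in y_j-\tau\sigma\big((\partial\Omega)_{c_4N^{-1/d}/\tau}\big)$, a set of Lebesgue measure $\tau^{d}\,\big|(\partial\Omega)_{c_4N^{-1/d}/\tau}\big|$; by the finite $(d-\beta)$-Minkowski content of $\partial\Omega$ in (\ref{Finite minkowski content}) this is $\le CN^{-\beta/d}$ for all large $N$, uniformly for $\tau\in[a,b]$ and $\sigma\in SO(d)$. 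Integrating in the compact variables $\sigma,\tau$ and summing over $j=1,\dots,N$,
\[
\mathbb E\!\left[\int_a^b\!\int_{SO(d)}\!\int_{\mathbb R^d}|\mathcal D_N(x+\tau\sigma\Omega)|^{2}\,dx\,d\sigma\,d\tau\right]\le C'\,N\cdot N^{-\beta/d}=C'N^{1-\beta/d},
\]
so some realisation of $\mathcal P_N$ makes the triple integral $\le C'N^{1-\beta/d}$; taking square roots gives (\ref{Discrep alto}), and the finitely many $N$ too small for the Minkowski bound to apply are absorbed into the constant.

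The only step that is not routine is the equal-mass, small-diameter partition of $E$; once it is available, the variance computation and the Minkowski-content estimate are standard. I would isolate it as a separate lemma, proved by combining a dyadic/cube decomposition of a ball containing $E$ with a greedy regrouping along a space-filling ordering of the cells, invoking (\ref{Ahlfors}) and (\ref{Alfhors regular}) to bound, respectively, the mass of the small cells and the diameters of the regrouped pieces.
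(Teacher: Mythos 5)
Your argument is correct and is, at bottom, the same argument as the paper's: an equal-measure partition of $E$ into $N$ Borel pieces of diameter $O(N^{-1/d})$, jittered (stratified) sampling with one random point per piece, the variance bound by the number of pieces straddling $\partial\left(x+\tau\sigma\Omega\right)$, and the Minkowski content hypothesis (\ref{Finite minkowski content}) to control that count on average. The difference is one of packaging: the paper does not carry out the probabilistic computation but invokes Corollary 8.2 of \cite{BCCGT}, which encapsulates exactly the jittered-sampling estimate you wrote out, and it verifies that corollary's hypothesis by bounding, for each fixed set $\mathcal{G}=\left(x+\tau\sigma\Omega\right)\cap E$, the measure of the two-sided $t$-collar of the boundary inside $E$ by $c\tau^{d-\beta}t^{\beta}\leqslant cb^{d-\beta}t^{\beta}$; you instead fix a cell and integrate the straddling condition over $(x,\sigma,\tau)$, which yields the same $N\cdot N^{-\beta/d}$ total. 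The one ingredient you should cite rather than re-derive is the partition itself: its existence for $d$-regular sets (this is precisely where (\ref{Alfhors regular}) enters) is Theorem 2 of \cite{GL}, which is what the paper uses; your greedy/space-filling-curve sketch is plausible but glosses over the cells of very small $\mu$-mass that barely meet $E$, and making that regrouping rigorous is exactly the content of that reference. With that citation in place, your proof is complete and matches the paper's.
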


\begin{proof}
Since $\Omega$ and $E$ are bounded, then $\mathcal{D}_{N}\left(  x+\tau
\sigma\Omega\right)  $ has compact support, hence%
\begin{align*}
&  \int_{a}^{b}\int_{SO\left(  d\right)  }\int_{\mathbb{R}^{d}}\left\vert
\mathcal{D}_{N}\left(  x+\tau\sigma\Omega\right)  \right\vert ^{2}dxd\sigma
d\tau\\
&  =\int_{a}^{b}\int_{SO\left(  d\right)  }\int_{\left\{  \left\vert
x\right\vert \leqslant R\right\}  }\left\vert \mathcal{D}_{N}\left(
x+\tau\sigma\Omega\right)  \right\vert ^{2}dxd\sigma d\tau
\end{align*}
for a suitable $R>0$. Let us show that the above theorem follows applying
Corollary 8.2 in \cite{BCCGT} to the collection of sets
\[
\mathbb{G=}\left\{  \left(  x+\tau\sigma\Omega\right)  \cap E:\left\vert
x\right\vert \leqslant R,a\leqslant\tau\leqslant b,\sigma\in SO\left(
d\right)  \right\}  .
\]
First, observe that by Theorem 2 in \cite{GL} we can always decompose $E$ as
the union of $N$ sets of measure $N^{-1}$ and diameter of the order of
$N^{-1/d}$ as required by Corollary 8.2 in \cite{BCCGT}. It remains to check
that for every $\mathcal{G}\in\mathbb{G}$ we have%
\[
\left\vert \left\{  x\in\mathcal{G}:\operatorname*{dist}\left(  x,E\setminus
\mathcal{G}\right)  \leqslant t\right\}  \right\vert +\left\vert \left\{  x\in
E\setminus\mathcal{G}:\operatorname*{dist}\left(  x,\mathcal{G}\right)
\leqslant t\right\}  \right\vert \leqslant ct^{\beta}.
\]
Let $R=x+\tau\sigma\Omega$ and let $\mathcal{G}=R\cap E$. Then one has%
\begin{align*}
&  \left\{  x\in\mathcal{G}:\operatorname*{dist}\left(  x,E\setminus
\mathcal{G}\right)  \leqslant t\right\}  \cup\left\{  x\in E\setminus
\mathcal{G}:\operatorname*{dist}\left(  x,\mathcal{G}\right)  \leqslant
t\right\} \\
&  \subseteq\left\{  y\in E:\operatorname*{dist}\left(  y,\partial R\right)
\leqslant t\right\}  .
\end{align*}
Hence%
\begin{align*}
&  \left\vert \left\{  y\in\mathcal{G}:\operatorname*{dist}\left(
y,E\setminus\mathcal{G}\right)  \leqslant t\right\}  \right\vert +\left\vert
\left\{  y\in E\setminus\mathcal{G}:\operatorname*{dist}\left(  x,\mathcal{G}%
\right)  \leqslant t\right\}  \right\vert \\
&  \leqslant\left\vert \left\{  y\in E:\operatorname*{dist}\left(  y,\partial
R\right)  \leqslant t\right\}  \right\vert \\
&  \leqslant\left\vert \left\{  y\in\mathbb{R}^{d}:\operatorname*{dist}\left(
y,\partial\left(  \tau\sigma\Omega\right)  \right)  \leqslant t\right\}
\right\vert \\
&  =\left\vert \tau\left\{  w\in\mathbb{R}^{d}:\operatorname*{dist}\left(
w,\partial\Omega\right)  \leqslant\tau^{-1}t\right\}  \right\vert \leqslant
c\tau^{d-\beta}t^{\beta}\leqslant cb^{d-\beta}t^{\beta}.
\end{align*}

\end{proof}

\section{Proof of Theorem \ref{Thm 1}}

For the proof of Theorem \ref{Thm 1} we will use the Fourier transform of the
discrepancy function associated to the point distribution $\mathcal{P}%
_{N}=\left\{  z_{1},z_{2},\ldots,z_{N\ }\right\}  $,
\[
x\mapsto\mathcal{D}_{N}\left(  x,\tau,\sigma\right)  =\mathrm{card}\left(
\mathcal{P}_{N}\cap\left(  x+\tau\sigma\Omega\right)  \right)  -N\mu\left(
x+\tau\sigma\Omega\right)  .
\]

\begin{lemma}
\label{Lemma trasf discrep}We have%
\begin{align*}
\widehat{\mathcal{D}}_{N}\left(  \xi,\tau,\sigma\right)   &  =\int
_{\mathbb{R}^{d}}\mathcal{D}_{N}\left(  x,\tau,\sigma\right)  e^{-2\pi
i\xi\cdot x}dx\\
&  =\left\{  \sum_{j=1}^{N}e^{-2\pi i\xi\cdot z_{j}}-N\widehat{\mu}\left(
\xi\right)  \right\}  \overline{\widehat{\chi}_{\tau\sigma\Omega}\left(
\xi\right)  }.
\end{align*}

\end{lemma}

\begin{proof}
This is a simple consequence of the fact that%
\[
\mathcal{D}_{N}\left(  x,\tau,\sigma\right)  =\chi_{-\tau\sigma\Omega}%
\ast\left(  \sum_{j=1}^{N}\delta_{z_{j}}-N\mu\right)  \left(  x\right)  .
\]

\end{proof}

The following lemma is an extension of Lemma 4.2 in \cite{BCT}, see also
Theorem 8 in \cite{BGT}.

\begin{lemma}
\label{Lemma Leo}Let $\Omega$ be as in Theorem \ref{Thm 1}.\newline1) There
exist positive constants $c_{1},c_{2},\gamma$ and $\delta$ such that for
$\rho$ large enough%
\[
c_{1}\rho^{-\beta}\leqslant\int_{\left\{  \gamma\rho\leqslant\left\vert
\xi\right\vert \leqslant\delta\rho\right\}  }\left\vert \widehat{\chi_{\Omega
}}\left(  \xi\right)  \right\vert ^{2}d\xi\leqslant c_{2}\rho^{-\beta}.
\]
2) There exist positive constants $c_{3},c_{4},\gamma$ and $\delta$ such that
for $\left\vert \xi\right\vert $ large enough%
\[
c_{3}\,\left\vert \xi\right\vert ^{-d-\beta}\leqslant\int_{\gamma}^{\delta
}\int_{SO\left(  d\right)  }\left\vert \widehat{\chi}_{u\sigma\Omega}\left(
\xi\right)  \right\vert ^{2}dud\sigma\leqslant c_{4}\,\left\vert
\xi\right\vert ^{-d-\beta}.
\]

\end{lemma}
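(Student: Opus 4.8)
The plan is to reduce both parts to estimating $\int |\widehat{\chi_\Omega}(\xi)|^2$ over dyadic-type annuli, and to connect such integrals to the hypotheses \eqref{Diff simm da sopra} and \eqref{Diff simm da sotto} through the elementary Fourier identity
\[
\int_{\mathbb{R}^d}\bigl|\widehat{\chi_\Omega}(\xi)\bigr|^2\,\bigl|1-e^{-2\pi i h\cdot\xi}\bigr|^2\,d\xi
=\bigl|(h+\Omega)\bigtriangleup\Omega\bigr|,
\]
which follows from Plancherel applied to $\chi_{h+\Omega}-\chi_\Omega$ (noting that $(h+\Omega)\bigtriangleup\Omega$ has characteristic function $|\chi_{h+\Omega}-\chi_\Omega|=(\chi_{h+\Omega}-\chi_\Omega)^2$). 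First I would establish the \emph{upper} bound in part 1). Choose $h=h(\rho)$ with $|h|\asymp\rho^{-1}$; on the annulus $\{\gamma\rho\le|\xi|\le\delta\rho\}$ one can pick the direction of $h$ so that $|1-e^{-2\pi i h\cdot\xi}|\ge c>0$ there (or integrate over a few directions of $h$ and average), hence
\[
c\int_{\{\gamma\rho\le|\xi|\le\delta\rho\}}\bigl|\widehat{\chi_\Omega}(\xi)\bigr|^2\,d\xi
\le\int_{\mathbb{R}^d}\bigl|\widehat{\chi_\Omega}(\xi)\bigr|^2\bigl|1-e^{-2\pi i h\cdot\xi}\bigr|^2\,d\xi
=\bigl|(h+\Omega)\bigtriangleup\Omega\bigr|\le\kappa_1|h|^\beta\le C\rho^{-\beta},
\]
using \eqref{Diff simm da sopra}. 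This gives $c_2\rho^{-\beta}$.

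For the \emph{lower} bound in part 1) I would argue in the opposite direction, exploiting \eqref{Diff simm da sotto} and the non-lacunarity \eqref{non lacunare}. Fix $n$ and set $h=t_n\overline\Theta$. Split $\mathbb{R}^d=\{|\xi|\le A t_n^{-1}\}\cup\{|\xi|>At_n^{-1}\}$ for a large constant $A$. On the outer region use $|1-e^{-2\pi i h\cdot\xi}|^2\le 4$ together with the tail bound $\int_{|\xi|>At_n^{-1}}|\widehat{\chi_\Omega}|^2\,d\xi\le C(At_n^{-1})^{-\beta}=CA^{-\beta}t_n^\beta$ (which is itself obtained by summing the already-proved upper bound over dyadic annuli $|\xi|\sim 2^k t_n^{-1}$, $k\ge\log_2 A$, and using $\beta>0$ for convergence); choosing $A$ large makes this term at most $\tfrac12\kappa_2 t_n^\beta$. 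Then \eqref{Diff simm da sotto} forces
\[
\tfrac12\kappa_2 t_n^\beta\le\int_{\{|\xi|\le A t_n^{-1}\}}\bigl|\widehat{\chi_\Omega}(\xi)\bigr|^2\bigl|1-e^{-2\pi i t_n\overline\Theta\cdot\xi}\bigr|^2\,d\xi\le C\!\!\int_{\{|\xi|\le A t_n^{-1}\}}\!\!\bigl|\widehat{\chi_\Omega}(\xi)\bigr|^2 d\xi .
\]
Subtracting a small inner part $\{|\xi|\le\epsilon t_n^{-1}\}$ (on which $|1-e^{-2\pi i t_n\overline\Theta\cdot\xi}|^2\le C\epsilon^2$, so its contribution is $\le C\epsilon^2\|\chi_\Omega\|_2^2$, again negligible after choosing $\epsilon$ small) we obtain a positive lower bound for $\int_{\{\epsilon t_n^{-1}\le|\xi|\le A t_n^{-1}\}}|\widehat{\chi_\Omega}|^2$. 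Finally, to pass from the sparse scales $t_n^{-1}$ to \emph{all} large $\rho$, given $\rho$ pick $n$ with $t_n^{-1}\le\rho< t_{n+1}^{-1}$; by \eqref{non lacunare} the ratio $t_{n+1}^{-1}/t_n^{-1}\le\kappa_3$ is bounded, so the annulus $\{\epsilon t_n^{-1}\le|\xi|\le A t_n^{-1}\}$ is contained in $\{\gamma\rho\le|\xi|\le\delta\rho\}$ for suitable fixed $\gamma=\epsilon\kappa_3^{-1}$, $\delta=A$, yielding $c_1\rho^{-\beta}\le\int_{\{\gamma\rho\le|\xi|\le\delta\rho\}}|\widehat{\chi_\Omega}|^2$.

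Part 2) follows from part 1) by integrating in polar-type coordinates over scalings and rotations. For $u>0$ and $\sigma\in SO(d)$ one has $\widehat{\chi}_{u\sigma\Omega}(\xi)=u^d\,\widehat{\chi_\Omega}(u\,\sigma^{-1}\xi)$, so
\[
\int_\gamma^\delta\!\!\int_{SO(d)}\bigl|\widehat{\chi}_{u\sigma\Omega}(\xi)\bigr|^2 du\,d\sigma
=\int_\gamma^\delta u^{2d}\!\!\int_{SO(d)}\bigl|\widehat{\chi_\Omega}(u\,\sigma^{-1}\xi)\bigr|^2 d\sigma\,du .
\]
Writing $r=|\xi|$ and using the rotation invariance of $d\sigma$, the inner integral depends only on $ur$; after the substitution this becomes $r^{-d}$ times an integral of $|\widehat{\chi_\Omega}|^2$ over a spherical shell of radii comparable to $r$ times a fixed range, i.e. over an annulus $\{\gamma' r\le|\zeta|\le\delta' r\}$ (up to a harmless average over the unit sphere, which only changes constants). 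Applying the two-sided estimate from part 1) with $\rho\asymp r=|\xi|$ gives the bound $\asymp r^{-d}\cdot r^{-\beta}=|\xi|^{-d-\beta}$, which is exactly the claim. The main obstacle I anticipate is the lower bound in part 1): one must be careful that the ``error'' pieces (the high-frequency tail and the low-frequency near-zero part of the multiplier $|1-e^{-2\pi i h\cdot\xi}|^2$) are genuinely smaller than the main term $\kappa_2 t_n^\beta$, which is why one needs the quantitative upper bound $O(\rho^{-\beta})$ \emph{first} (to control the tail via a convergent dyadic sum, using $\beta>0$) and the non-lacunarity \eqref{non lacunare} at the very end to upgrade the sparse sequence $t_n$ to a continuum of scales.
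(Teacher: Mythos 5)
Your overall strategy is the paper's: the Plancherel identity turning $\left\vert \left(h+\Omega\right)\bigtriangleup\Omega\right\vert$ into $\int\left\vert 1-e^{-2\pi ih\cdot\xi}\right\vert^{2}\left\vert\widehat{\chi_{\Omega}}\left(\xi\right)\right\vert^{2}d\xi$, dyadic upper bounds from \eqref{Diff simm da sopra}, a three-region splitting driven by \eqref{Diff simm da sotto} and \eqref{non lacunare} for the lower bound, and the scaling/rotation computation for part 2). But there is a genuine gap exactly where you anticipated trouble: your treatment of the low-frequency region. You bound the contribution of $\left\{\left\vert\xi\right\vert\leqslant\epsilon t_{n}^{-1}\right\}$ by $C\epsilon^{2}\left\Vert\chi_{\Omega}\right\Vert_{2}^{2}=C\epsilon^{2}\left\vert\Omega\right\vert$, a constant independent of $n$, while the main term $\kappa_{2}t_{n}^{\beta}$ tends to $0$. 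Hence for any fixed $\epsilon$ this error term eventually dominates the main term; and you cannot let $\epsilon$ shrink with $n$, because at the end you need a fixed $\gamma=\epsilon\kappa_{3}^{-1}$ to land in the annulus $\left\{\gamma\rho\leqslant\left\vert\xi\right\vert\leqslant\delta\rho\right\}$. As written, the constants do not close.

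The repair is a weighted low-frequency estimate, and it is what the paper uses: from the dyadic bounds $\int_{\left\{2^{k}\leqslant\left\vert\xi\right\vert\leqslant2^{k+1}\right\}}\left\vert\widehat{\chi_{\Omega}}\right\vert^{2}\leqslant c2^{-k\beta}$ (which you already have) one sums $2^{2k}2^{-k\beta}$, a convergent geometric series since $\beta\leqslant1<2$, to get $\int_{\left\{\left\vert\xi\right\vert\leqslant\epsilon t_{n}^{-1}\right\}}\left\vert\xi\right\vert^{2}\left\vert\widehat{\chi_{\Omega}}\left(\xi\right)\right\vert^{2}d\xi\leqslant c\left(\epsilon t_{n}^{-1}\right)^{2-\beta}$; combined with $\left\vert 1-e^{-2\pi it_{n}\overline{\Theta}\cdot\xi}\right\vert^{2}\leqslant4\pi^{2}t_{n}^{2}\left\vert\xi\right\vert^{2}$ the inner contribution becomes $\leqslant c\,\epsilon^{2-\beta}t_{n}^{\beta}$, i.e. a small multiple of the main term uniformly in $n$ once $\epsilon$ is fixed small (the paper phrases this with $\rho\asymp t_{n}^{-1}$ and invokes \eqref{non lacunare} through $\rho t_{n}\leqslant\kappa_{3}$ at exactly this point). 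With that correction your argument coincides with the paper's. A smaller remark on your upper bound: a single direction of $h$ cannot give $\left\vert 1-e^{-2\pi ih\cdot\xi}\right\vert\geqslant c$ on a whole annulus (the factor vanishes for $\xi\perp h$ and when $h\cdot\xi\in\mathbb{Z}$), so your parenthetical about using several directions is essential; the paper implements it with the $d$ coordinate directions, a partition of the sphere, and the normalization $\left\vert 2\pi h\cdot\xi\right\vert\leqslant2/3$ to stay away from the nonzero zeros of the multiplier. Part 2) as you sketch it matches the paper.
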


\begin{proof}
1) We first show that%
\[
\int_{\left\{  \left\vert \xi\right\vert \geqslant\rho\right\}  }\left\vert
\widehat{\chi_{\Omega}}\left(  \xi\right)  \right\vert ^{2}d\xi\leqslant
c\rho^{-\beta}.
\]
Let $\left\{  U_{j}\right\}  _{j=1}^{d}$ be a partition of the unit sphere
such that if $u=\left(  u_{1},\ldots,u_{d}\right)  \in U_{j}$ then $\left\vert
u_{j}\right\vert \geqslant\kappa>0$. Let $\left\{  e_{j}\right\}  _{j=1}^{d}$
be the canonical orthonormal basis in $\mathbb{R}^{d}$. Let $k\in\mathbb{N}$
and let $j=1,\ldots,d$ be given. Also let $h=\frac{1}{3\pi2^{k+1}}e_{j}$. Then
(\ref{Diff simm da sopra}) yields%
\begin{align*}
\kappa_{1}\left(  \frac{1}{3\pi2^{k+1}}\right)  ^{\beta} &  \geqslant
\left\vert \Omega\bigtriangleup\left(  \Omega+h\right)  \right\vert
=\int_{\mathbb{R}^{d}}\left\vert \chi_{\Omega}\left(  x\right)  -\chi
_{\Omega+h}\left(  x\right)  \right\vert ^{2}dx\\
&  =\int_{\mathbb{R}^{d}}\left\vert 1-e^{-2\pi ih\cdot\xi}\right\vert
^{2}\left\vert \widehat{\chi_{\Omega}}\left(  \xi\right)  \right\vert ^{2}%
d\xi\\
&  \geqslant\int_{\left\{  2^{k}\leqslant\left\vert \xi\right\vert
\leqslant2^{k+1}\right\}  }\left\vert 1-e^{-2\pi ih\cdot\xi}\right\vert
^{2}\left\vert \widehat{\chi_{\Omega}}\left(  \xi\right)  \right\vert ^{2}%
d\xi\\
&  \geqslant\int_{\left\{  2^{k}\leqslant\left\vert \xi\right\vert
\leqslant2^{k+1}:\frac{\xi}{\left\vert \xi\right\vert }\in U_{j}\right\}
}\left\vert 1-e^{-2\pi ih\cdot\xi}\right\vert ^{2}\left\vert \widehat
{\chi_{\Omega}}\left(  \xi\right)  \right\vert ^{2}d\xi.
\end{align*}
By our choice of $h$ and the definition of $U_{j}$, if $\frac{\xi}{\left\vert
\xi\right\vert }\in U_{j}$, then we have
\[
\frac{\kappa}{3}\leqslant\left\vert 2\pi h\cdot\xi\right\vert \leqslant
\frac{2}{3}%
\]
and therefore%
\[
\left\vert 1-e^{-2\pi ih\cdot\xi}\right\vert \geqslant c>0.
\]
Hence%
\[
\left(  \frac{1}{3\pi2^{k+1}}\right)  ^{\beta}\geqslant c\int_{\left\{
2^{k}\leqslant\left\vert \xi\right\vert \leqslant2^{k+1}:\frac{\xi}{\left\vert
\xi\right\vert }\in U_{j}\right\}  }\left\vert \widehat{\chi_{\Omega}}\left(
\xi\right)  \right\vert ^{2}d\xi.
\]
Repeating the above argument for every $j$ yields
\[
c2^{-k\beta}\geqslant\int_{\left\{  2^{k}\leqslant\left\vert \xi\right\vert
\leqslant2^{k+1}\right\}  }\left\vert \widehat{\chi_{\Omega}}\left(
\xi\right)  \right\vert ^{2}d\xi.
\]
Now let $k_{0}$ satisfy $2^{k_{0}}\leqslant\rho<2^{k_{0}+1}$. Then%
\begin{align*}
\int_{\left\{  \left\vert \xi\right\vert \geqslant\rho\right\}  }\left\vert
\widehat{\chi_{\Omega}}\left(  \xi\right)  \right\vert ^{2}d\xi &
\leqslant\sum_{k=k_{0}}^{+\infty}\int_{\left\{  2^{k}\leqslant\left\vert
\xi\right\vert \leqslant2^{k+1}\right\}  }\left\vert \widehat{\chi_{\Omega}%
}\left(  \xi\right)  \right\vert ^{2}d\xi\\
&  \leqslant c\sum_{k=k_{0}}^{+\infty}2^{-k\beta}\leqslant c\rho^{-\beta}.
\end{align*}
For $\gamma>0$ to be chosen later, we have%
\begin{align*}
&  \int_{\left\{  \left\vert \xi\right\vert \leqslant\gamma\rho\right\}
}\left\vert \xi\right\vert ^{2}\left\vert \widehat{\chi_{\Omega}}\left(
\xi\right)  \right\vert ^{2}d\xi\\
\leqslant &  \int_{\left\{  \left\vert \xi\right\vert \leqslant1\right\}
}\left\vert \widehat{\chi_{\Omega}}\left(  \xi\right)  \right\vert ^{2}%
d\xi+\sum_{1\leqslant2^{k}\leqslant\gamma\rho}\int_{\left\{  2^{k}%
\leqslant\left\vert \xi\right\vert \leqslant2^{k+1}\right\}  }\left\vert
\xi\right\vert ^{2}\left\vert \widehat{\chi_{\Omega}}\left(  \xi\right)
\right\vert ^{2}d\xi\\
\leqslant &  c+4\sum_{1\leqslant2^{k}\leqslant\gamma\rho}2^{2k}\int_{\left\{
2^{k}\leqslant\left\vert \xi\right\vert \leqslant2^{k+1}\right\}  }\left\vert
\widehat{\chi_{\Omega}}\left(  \xi\right)  \right\vert ^{2}d\xi\\
\leqslant &  c+c\sum_{2^{k}\leqslant\gamma\rho}2^{2k}2^{-k\beta}\\
= &  c+c\sum_{2^{k}\leqslant\gamma\rho}2^{\left(  2-\beta\right)  k}\leqslant
c\left(  \gamma\rho\right)  ^{2-\beta}.
\end{align*}
Let $n$ satisfy%
\[
t_{n+1}\leqslant\rho^{-1}\leqslant t_{n}%
\]
and let $h=t_{n}\overline{\Theta}$ with $\overline{\Theta}$ as in
(\ref{Diff simm da sotto}). Then%
\begin{align*}
\kappa_{2}\rho^{-\beta}\leqslant &  \kappa_{2}t_{n}^{\beta}\leqslant\left\vert
\Omega\bigtriangleup\left(  \Omega+h\right)  \right\vert =\int_{\mathbb{R}%
^{d}}\left\vert 1-e^{-2\pi ih\cdot\xi}\right\vert ^{2}\left\vert \widehat
{\chi_{\Omega}}\left(  \xi\right)  \right\vert ^{2}d\xi\\
\leqslant &  4\pi^{2}\int_{\left\{  \left\vert \xi\right\vert \leqslant
\gamma\rho\right\}  }\left\vert h\right\vert ^{2}\left\vert \xi\right\vert
^{2}\left\vert \widehat{\chi_{\Omega}}\left(  \xi\right)  \right\vert ^{2}%
d\xi+4\int_{\left\{  \gamma\rho<\left\vert \xi\right\vert <\delta\rho\right\}
}\left\vert \widehat{\chi_{\Omega}}\left(  \xi\right)  \right\vert ^{2}d\xi\\
&  +4\int_{\left\{  \left\vert \xi\right\vert \geqslant\delta\rho\right\}
}\left\vert \widehat{\chi_{\Omega}}\left(  \xi\right)  \right\vert ^{2}d\xi\\
\leqslant &  c\,t_{n}^{2}\,\left(  \gamma\rho\right)  ^{2-\beta}%
+4\int_{\left\{  \gamma\rho<\left\vert \xi\right\vert <\delta\rho\right\}
}\left\vert \widehat{\chi_{\Omega}}\left(  \xi\right)  \right\vert ^{2}%
d\xi+c\left(  \delta\rho\right)  ^{-\beta}%
\end{align*}
Observe that by (\ref{non lacunare}) we have%
\[
\rho t_{n}\leqslant\rho\kappa_{3}t_{n+1}\leqslant\kappa_{3}.
\]
If follows that%
\begin{align*}
4\int_{\left\{  \gamma\rho<\left\vert \xi\right\vert <\delta\rho\right\}
}\left\vert \widehat{\chi_{\Omega}}\left(  \xi\right)  \right\vert ^{2}d\xi &
\geqslant\kappa_{2}\rho^{-\beta}-c\left(  \gamma\rho\right)  ^{2-\beta}%
t_{n}^{2}-c\left(  \delta\rho\right)  ^{-\beta}\\
&  =\rho^{-\beta}\left(  \kappa_{2}-c\gamma^{2-\beta}\rho^{2}t_{n}^{2}%
-c\delta^{-\beta}\right)  \\
&  \geqslant c\rho^{-\beta}%
\end{align*}
for $\gamma$ small enough and $\delta$ large enough.

2) Let $\gamma$ and $\delta$ be as in 1). If $d\Theta$ is the normalized
surface measure on the sphere $\Sigma_{d-1}$, then%
\begin{align*}
&  \int_{\gamma}^{\delta}\int_{SO\left(  d\right)  }\left\vert \widehat{\chi
}_{u\sigma\Omega}\left(  \xi\right)  \right\vert ^{2}dud\sigma\\
&  =\int_{\gamma}^{\delta}\int_{SO\left(  d\right)  }u^{2d}\left\vert
\widehat{\chi}_{\Omega}\left(  u\sigma^{-1}\xi\right)  \right\vert
^{2}dud\sigma\\
&  =\int_{\gamma}^{\delta}u^{2d}\int_{\Sigma_{d-1}}\left\vert \widehat{\chi
}_{\Omega}\left(  u\left\vert \xi\right\vert \Theta\right)  \right\vert
^{2}dud\Theta\\
&  =\left\vert \xi\right\vert ^{-1-2d}\int_{\gamma\left\vert \xi\right\vert
}^{\delta\left\vert \xi\right\vert }\int_{\Sigma_{d-1}}t^{2d}\left\vert
\widehat{\chi}_{\Omega}\left(  t\Theta\right)  \right\vert ^{2}dtd\Theta\\
&  =\left\vert \xi\right\vert ^{-d}\int_{\gamma\left\vert \xi\right\vert
}^{\delta\left\vert \xi\right\vert }\int_{\Sigma_{d-1}}\left(  t\left\vert
\xi\right\vert ^{-1}\right)  ^{d+1}\left\vert \widehat{\chi}_{\Omega}\left(
t\Theta\right)  \right\vert ^{2}t^{d-1}dtd\Theta\\
&  \approx\left\vert \xi\right\vert ^{-d}\int_{\left\{  \delta\xi
\leqslant\left\vert \zeta\right\vert \leqslant\gamma\left\vert \xi\right\vert
\right\}  }\left\vert \widehat{\chi}_{\Omega}\left(  \zeta\right)  \right\vert
^{2}d\zeta\approx\left\vert \xi\right\vert ^{-d-\beta}.
\end{align*}

\end{proof}

In the proof of Theorem \ref{Thm 1} we need a positive function with positive
and compactly supported Fourier transform. Let $\psi$ be a positive smooth
radial function with support in $\left\{  x\in\mathbb{R}^{d}:\left\vert
x\right\vert \leqslant\frac{1}{2}\right\}  $ such that $\left\Vert
\psi\right\Vert _{1}=1$, and let%
\[
K\left(  x\right)  =\left(  \widehat{\psi}\left(  x\right)  \right)
^{2}\text{.}%
\]
Then
\[
0\leqslant\psi\ast\psi\left(  \xi\right)  =\widehat{K}\left(  \xi\right)
\leqslant1,
\]
and $\widehat{K}\left(  \xi\right)  =0$ for $\left\vert \xi\right\vert
\geqslant1$. For every $M\geqslant1$ let%
\begin{equation}
K_{M}\left(  x\right)  =M^{d}K\left(  Mx\right)  .\label{def KM}%
\end{equation}
Since $\widehat{K}_{M}\left(  \xi\right)  =\widehat{K}\left(  \xi/M\right)  $
we have%
\[
0\leqslant\widehat{K}_{M}\left(  \xi\right)  \leqslant1,
\]
and $\widehat{K}_{M}\left(  \xi\right)  =0$ for $\left\vert \xi\right\vert
\geqslant M$. Moreover, for every given $L>0$ there exists $c>0$ such that%
\[
0\leqslant K\left(  x\right)  \leqslant c\left\{
\begin{array}
[c]{lc}%
1 & \text{if }\left\vert x\right\vert \leqslant1\\
\left\vert x\right\vert ^{-L} & \text{if }\left\vert x\right\vert \geqslant1
\end{array}
\right.
\]
and therefore%
\begin{equation}
\left\vert K_{M}\left(  x\right)  \right\vert \leqslant\left\{
\begin{array}
[c]{lc}%
M^{d} & \text{if }\left\vert x\right\vert \leqslant\frac{1}{M},\\
M^{d-L}x^{-L} & \text{if }\left\vert x\right\vert \geqslant\frac{1}{M}.
\end{array}
\right.  \label{Stima KM}%
\end{equation}

\begin{lemma}
\label{Lemma alfa palla}Let $\mu$ be as in Theorem \ref{Thm 1}. There exists
$c>0$ such that for every $M\geq1$ and for every $z\in\mathbb{R}^{d}$ we have%
\[
\left\vert K_{M}\ast\mu\left(  z\right)  \right\vert \leqslant cM^{d-\alpha}.
\]

\end{lemma}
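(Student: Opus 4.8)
The plan is to estimate $K_{M}\ast\mu(z)=\int_{\mathbb{R}^{d}}K_{M}(z-y)\,d\mu(y)$ (the absolute values may be dropped since $K_{M}\geq0$) by a standard dyadic decomposition around the point $z$, using the pointwise bound \eqref{Stima KM} for $K_{M}$ together with the Ahlfors-type growth condition \eqref{Ahlfors} for $\mu$. The one free parameter is $L$: recall that, since $\psi$ is smooth with compact support, $\widehat{\psi}$ is a Schwartz function, so $K=(\widehat{\psi})^{2}$ decays faster than any polynomial and \eqref{Stima KM} holds for every $L>0$. I would fix once and for all some $L>\alpha$.

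First I would split $\mathbb{R}^{d}=B\left(z,\tfrac1M\right)\cup\bigcup_{j\geq0}A_{j}$, where $A_{j}=\left\{y:\tfrac{2^{j}}{M}<|z-y|\leq\tfrac{2^{j+1}}{M}\right\}$. On the central ball, \eqref{Stima KM} gives $K_{M}(z-y)\leq M^{d}$, while \eqref{Ahlfors} gives $\mu\left(B\left(z,\tfrac1M\right)\right)\leq c\,M^{-\alpha}$; hence the contribution of $B\left(z,\tfrac1M\right)$ is at most $c\,M^{d-\alpha}$. On the annulus $A_{j}$ one has $|z-y|\geq 2^{j}/M\geq 1/M$, so the second line of \eqref{Stima KM} applies and $K_{M}(z-y)\leq M^{d-L}|z-y|^{-L}\leq M^{d-L}(2^{j}/M)^{-L}=M^{d}\,2^{-jL}$; moreover $A_{j}\subseteq B\left(z,\tfrac{2^{j+1}}{M}\right)$, so \eqref{Ahlfors} yields $\mu(A_{j})\leq c\,2^{(j+1)\alpha}M^{-\alpha}$. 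Therefore the contribution of $A_{j}$ is bounded by $c\,M^{d}2^{-jL}\cdot 2^{(j+1)\alpha}M^{-\alpha}=c\,M^{d-\alpha}\,2^{j(\alpha-L)}$.

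Summing over $j\geq0$ and using $L>\alpha$ gives $\sum_{j\geq0}2^{j(\alpha-L)}<\infty$, so the total annular contribution is again $\leq c\,M^{d-\alpha}$, and combining the two pieces yields $|K_{M}\ast\mu(z)|\leq c\,M^{d-\alpha}$ with a constant independent of $M\geq1$ and $z\in\mathbb{R}^{d}$. I do not expect any genuine obstacle here: the argument is a routine geometric-series estimate, and the only point requiring (minimal) care is to choose the decay exponent $L$ strictly larger than $\alpha$ before starting the decomposition; neither the boundedness of $E$ nor the normalization $\mu(E)=1$ is actually needed, only \eqref{Ahlfors}.
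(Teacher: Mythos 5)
Your proof is correct and follows essentially the same route as the paper: a dyadic decomposition around $z$ that combines the pointwise bound (\ref{Stima KM}) with the growth condition (\ref{Ahlfors}) and a convergent geometric series. The only cosmetic difference is that you use annuli at scales $2^{j}/M$ with a single fixed $L>\alpha$, whereas the paper splits at unit scale $\left\vert z-x\right\vert \geqslant 1$ and uses two choices of $L$; both yield the same bound $cM^{d-\alpha}$.
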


\begin{proof}
Let $z\in\mathbb{R}^{d}$ and let $B_{2^{-k}}=\left\{  x\in\mathbb{R}%
^{d}:\left\vert z-x\right\vert \leqslant2^{-k}\right\}  $. Then%
\begin{align*}
&  K_{M}\ast\mu\left(  z\right)  =\int_{\mathbb{R}^{d}}K_{M}\left(
z-x\right)  d\mu\left(  x\right) \\
&  =\int_{\left\{  \left\vert z-x\right\vert \geqslant1\right\}  }K_{M}\left(
z-x\right)  d\mu\left(  x\right)  +\sum_{k=0}^{+\infty}\int_{B_{2^{-k}%
}\backslash B_{2^{-k-1}}}K_{M}\left(  z-x\right)  d\mu\left(  x\right) \\
&  =I_{1}\left(  z\right)  +I_{2}\left(  z\right)  .
\end{align*}

Using (\ref{Stima KM}) with $L=\alpha$, we readily obtain%
\[
I_{1}\left(  z\right)  \leqslant c\int_{\left\{  \left\vert z-x\right\vert
\geqslant1\right\}  }M^{d-\alpha}d\mu\left(  x\right)  \leqslant cM^{d-\alpha
}.
\]
Moreover, using (\ref{Stima KM}) with $L$ large enough, we have%
\begin{align*}
I_{2}\left(  z\right)   &  =\sum_{2^{-k}M\leqslant1}\int_{B_{2^{-k}}\backslash
B_{2^{-k-1}}}K_{M}\left(  z-x\right)  d\mu\left(  x\right) \\
&  +\sum_{2^{-k}M>1}\int_{B_{2^{-k}}\backslash B_{2^{-k-1}}}K_{M}\left(
z-x\right)  d\mu\left(  x\right) \\
&  \leqslant c\sum_{2^{-k}M\leqslant1}\int_{B_{2^{-k}}\backslash B_{2^{-k-1}}%
}M^{d}d\mu\left(  x\right) \\
&  +c\sum_{2^{-k}M>1}\int_{B_{2^{-k}}\backslash B_{2^{-k-1}}}M^{d-L}\left\vert
z-x\right\vert ^{-L}d\mu\left(  x\right) \\
&  \leqslant c\sum_{2^{-k}M\leqslant1}M^{d}\mu\left(  B_{2^{-k}}\right)
+c\sum_{2^{-k}M>1}M^{d-L}2^{kL}\mu\left(  B_{2^{-k}}\right) \\
&  \leqslant cM^{d}\sum_{2^{-k}M\leqslant1}2^{-k\alpha}+cM^{d-L}\sum
_{2^{-k}M>1}2^{k\left(  L-\alpha\right)  }\\
&  \leqslant cM^{d}M^{-\alpha}+cM^{d-L}M^{L-\alpha}=cM^{d-\alpha}.
\end{align*}

\end{proof}

The next lemma is a generalization of an elegant result of Cassels and
Mongomery (see \cite[Chapter 6]{montgomery}).

\begin{lemma}
\label{Lemma Cassels}Let $\mu$ be as in Theorem \ref{Thm 1}. There exist
$c_{1},c_{2}>0$ such that for every $N$, for every point distribution
$\mathcal{P}_{N}=\left\{  z_{1},z_{2},\ldots,z_{N}\right\}  $, and for every
$M\geqslant c_{1}N^{1/\alpha}$ we have%
\[
\int_{\left\{  1\leqslant\left\vert \xi\right\vert \leqslant M\right\}
}\left\vert \sum_{j=1}^{N}e^{-2\pi i\xi\cdot z_{j}}-N\widehat{\mu}\left(
\xi\right)  \right\vert ^{2}d\xi\geqslant c_{2}NM^{d}.
\]

\end{lemma}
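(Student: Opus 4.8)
The plan is to mimic the classical Cassels–Montgomery argument, but with the Lebesgue measure replaced by the $\alpha$-dimensional measure $\mu$, using Lemma~\ref{Lemma alfa palla} exactly where the classical proof uses a trivial bound on the Fejér-type kernel. Write $S(\xi)=\sum_{j=1}^{N}e^{-2\pi i\xi\cdot z_{j}}-N\widehat{\mu}(\xi)$ and let $K_M$ be the kernel constructed before Lemma~\ref{Lemma alfa palla}, so that $0\le\widehat K_M\le 1$, $\widehat K_M$ is supported in $\{|\xi|\le M\}$, and $\widehat K_M(0)=1$ (since $\|\psi\|_1=1$). The key identity is obtained by expanding
\[
\int_{\mathbb{R}^d}\widehat K_M(\xi)\,|S(\xi)|^2\,d\xi
=\sum_{j,k}K_M(z_j-z_k)-2N\sum_j (K_M\ast\mu)(z_j)+N^2\,(K_M\ast\mu\ast\widetilde\mu)(0),
\]
where $\widetilde\mu$ is the reflection of $\mu$; here one uses $\widehat K_M|\widehat\mu|^2=\widehat{K_M\ast\mu\ast\widetilde\mu}$ and the fact that $K_M$ is even. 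The diagonal terms $j=k$ contribute exactly $NK_M(0)=NM^dK(0)$, which is the main positive term, of size $\asymp NM^d$.

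Next I would bound the three sums from below. The off-diagonal sum $\sum_{j\ne k}K_M(z_j-z_k)$ is nonnegative because $K_M\ge 0$, so it can simply be dropped. For the cross term, Lemma~\ref{Lemma alfa palla} gives $|(K_M\ast\mu)(z_j)|\le cM^{d-\alpha}$ for each $j$, hence $2N\sum_j|(K_M\ast\mu)(z_j)|\le 2cN^2M^{d-\alpha}$. For the last term, write $K_M\ast\mu\ast\widetilde\mu(0)=\int (K_M\ast\mu)(y)\,d\mu(y)$ and apply Lemma~\ref{Lemma alfa palla} again together with $\mu(E)=1$ to get $|N^2(K_M\ast\mu\ast\widetilde\mu)(0)|\le cN^2M^{d-\alpha}$. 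Combining,
\[
\int_{\mathbb{R}^d}\widehat K_M(\xi)\,|S(\xi)|^2\,d\xi\ \ge\ NM^dK(0)-c'N^2M^{d-\alpha}.
\]
Choosing $M\ge c_1N^{1/\alpha}$ with $c_1$ large enough makes the second term at most $\tfrac12 NM^dK(0)$, so the whole integral is $\ge c\,NM^d$. Since $0\le\widehat K_M\le 1$ and $\widehat K_M$ is supported in $\{|\xi|\le M\}$, the left side is at most $\int_{\{|\xi|\le M\}}|S(\xi)|^2\,d\xi$; subtracting the contribution of $\{|\xi|\le 1\}$, which is $O(N^2)=O(NM^d\cdot N/M^d)$ and hence negligible once $M\gtrsim N^{1/\alpha}$ (so $N/M^d\lesssim N^{1-d/\alpha}\le 1$ since $\alpha\le d$), we obtain $\int_{\{1\le|\xi|\le M\}}|S(\xi)|^2\,d\xi\ge c_2NM^d$, as claimed. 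One small point: to discard the ball $\{|\xi|\le 1\}$ cleanly one uses $|S(\xi)|^2\le (N+N)^2=4N^2$ there (since $|\widehat\mu|\le\mu(E)=1$), and $\int_{\{|\xi|\le1\}}4N^2\,d\xi=c_dN^2$, which is absorbed by enlarging $c_1$.

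I expect the main obstacle to be purely bookkeeping: making sure the kernel $K_M$ really has $\widehat K_M(0)=1$ and $K_M(0)=M^dK(0)>0$ with $K(0)=(\widehat\psi(0))^2=\|\psi\|_1^2=1$, and verifying that the Fourier-side identity is legitimate (all series and integrals converge absolutely because $K_M$ decays faster than any polynomial and $\mu$ is a finite measure, so Fubini and Poisson-type summation cause no trouble). The genuinely new ingredient over the classical case is the uniform bound $|K_M\ast\mu(z)|\le cM^{d-\alpha}$ from Lemma~\ref{Lemma alfa palla}, which is exactly what replaces the classical estimate $\|K_M\ast dx\|_\infty\le 1$; everything else is the standard completion-of-the-square computation.
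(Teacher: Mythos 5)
Your argument is correct and is essentially the paper's own proof: the same kernel $K_{M}$, the same expansion of $\int\widehat{K}_{M}\left\vert S\right\vert ^{2}d\xi$ into the double sum plus cross and measure terms, the same use of positivity of the off-diagonal terms, the bound $\left\vert K_{M}\ast\mu\left(  z_{j}\right)  \right\vert \leqslant cM^{d-\alpha}$ from Lemma \ref{Lemma alfa palla}, and the same absorption of the $O\left(  N^{2}\right)  $ contribution of $\left\{  \left\vert \xi\right\vert \leqslant1\right\}  $ by taking $c_{1}$ large (the only cosmetic difference is that you bound the term $N^{2}\int\widehat{K}_{M}\left\vert \widehat{\mu}\right\vert ^{2}d\xi$ in absolute value, while the paper simply drops it as nonnegative; also your aside $\widehat{K}_{M}\left(  0\right)  =1$ is inessential and in fact that value is $\left\Vert \psi\right\Vert _{2}^{2}$, but it is never used).
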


\begin{proof}
Let $M\geqslant1$. We have%
\begin{align}
&  \int_{\left\{  1\leqslant\left\vert \xi\right\vert \leqslant M\right\}
}\left\vert \sum_{j=1}^{N}e^{-2\pi i\xi\cdot z_{j}}-N\widehat{\mu}\left(
\xi\right)  \right\vert ^{2}d\xi\nonumber\\
= &  \int_{\left\{  \left\vert \xi\right\vert \leqslant M\right\}  }\left\vert
\sum_{j=1}^{N}e^{-2\pi i\xi\cdot z_{j}}-N\widehat{\mu}\left(  \xi\right)
\right\vert ^{2}d\xi\nonumber\\
&  -\int_{\left\{  \left\vert \xi\right\vert \leqslant1\right\}  }\left\vert
\sum_{j=1}^{N}e^{-2\pi i\xi\cdot z_{j}}-N\widehat{\mu}\left(  \xi\right)
\right\vert ^{2}d\xi\nonumber\\
\geqslant &  \int_{\left\vert \xi\right\vert \leqslant M}\left\vert \sum
_{j=1}^{N}e^{-2\pi i\xi\cdot z_{j}}-N\widehat{\mu}\left(  \xi\right)
\right\vert ^{2}d\xi-cN^{2}.\nonumber
\end{align}
Let $K_{M}$ be as in (\ref{def KM}), then%
\begin{align*}
&  \int_{\left\{  \left\vert \xi\right\vert \leqslant M\right\}  }\left\vert
\sum_{j=1}^{N}e^{-2\pi i\xi\cdot z_{j}}-N\widehat{\mu}\left(  \xi\right)
\right\vert ^{2}d\xi\\
&  \geqslant\int_{\mathbb{R}^{d}}\widehat{K}_{M}\left(  \xi\right)  \left\vert
\sum_{j=1}^{N}\left(  e^{-2\pi i\xi\cdot z_{j}}-\widehat{\mu}\left(
\xi\right)  \right)  \right\vert ^{2}d\xi\\
&  =\int_{\mathbb{R}^{d}}\widehat{K}_{M}\left(  \xi\right)  \sum_{j=1}%
^{N}\left(  e^{2\pi i\xi\cdot z_{j}}-\overline{\widehat{\mu}\left(
\xi\right)  }\right)  \sum_{k=1}^{N}\left(  e^{-2\pi i\xi\cdot z_{k}}%
-\widehat{\mu}\left(  \xi\right)  \right)  d\xi\\
&  =\int_{\mathbb{R}^{d}}\widehat{K}_{M}\left(  \xi\right)  \\
&  \times\sum_{k,j=1}^{N}\left(  e^{2\pi i\xi\cdot\left(  z_{j}-z_{k}\right)
}-e^{2\pi i\xi\cdot z_{j}}\widehat{\mu}\left(  \xi\right)  -\overline
{\widehat{\mu}\left(  \xi\right)  }e^{-2\pi i\xi\cdot z_{k}}+\left\vert
\widehat{\mu}\left(  \xi\right)  \right\vert ^{2}\right)  d\xi\\
&  =\sum_{k,j=1}^{N}\int_{\mathbb{R}^{d}}\widehat{K}_{M}\left(  \xi\right)
e^{2\pi i\xi\cdot\left(  z_{j}-z_{k}\right)  }d\xi-2N\sum_{j=1}^{N}%
\int_{\mathbb{R}^{d}}\widehat{K}_{M}\left(  \xi\right)  \widehat{\mu}\left(
\xi\right)  e^{2\pi i\xi\cdot z_{j}}d\xi\\
&  +N^{2}\int_{\mathbb{R}^{d}}\widehat{K}_{M}\left(  \xi\right)  \left\vert
\widehat{\mu}\left(  \xi\right)  \right\vert ^{2}d\xi\\
&  =\sum_{k,j=1}^{N}K_{M}\left(  z_{j}-z_{k}\right)  -2N\sum_{j=1}^{N}%
K_{M}\ast\mu\left(  z_{j}\right)  +N^{2}\int_{\mathbb{R}^{d}}\widehat{K}%
_{M}\left(  \xi\right)  \left\vert \widehat{\mu}\left(  \xi\right)
\right\vert ^{2}d\xi.
\end{align*}
Since the terms in the double sum $1\leqslant j,k\leqslant N$ are positive,
the double sum is bounded below by $NK_{M}\left(  0\right)  $. Moreover also
the last integral is positive. Hence%
\[
\int_{\left\{  \left\vert \xi\right\vert \leqslant M\right\}  }\left\vert
\sum_{j=1}^{N}e^{-2\pi i\xi\cdot z_{j}}-N\widehat{\mu}\left(  \xi\right)
\right\vert ^{2}d\xi\geqslant NK_{M}\left(  0\right)  -2N\sum_{j=1}^{N}%
K_{M}\ast\mu\left(  z_{j}\right)  .
\]
Since $K_{M}\left(  0\right)  =M^{d}$ and since, by Lemma
\ref{Lemma alfa palla} we have $\left\vert K_{M}\ast\mu\left(  z_{j}\right)
\right\vert \leqslant cM^{d-\alpha}$, we obtain%
\[
\int_{\left\{  \left\vert \xi\right\vert \leqslant M\right\}  }\left\vert
\sum_{j=1}^{N}e^{-2\pi i\xi\cdot z_{j}}-N\widehat{\mu}\left(  \xi\right)
\right\vert ^{2}d\xi\geqslant NM^{d}-cN^{2}M^{d-\alpha}.
\]
We recall that $\alpha\leqslant d$. Then, if $M\geqslant c_{1}N^{1/\alpha}$
with $c_{1}$ large enough, we have%
\begin{align*}
&  \int_{\left\{  1\leqslant\left\vert \xi\right\vert \leqslant M\right\}
}\left\vert \sum_{j=1}^{N}e^{-2\pi i\xi\cdot z_{j}}-N\widehat{\mu}\left(
\xi\right)  \right\vert ^{2}d\xi\geqslant NM^{d}-cN^{2}M^{d-\alpha}-cN^{2}\\
&  =NM^{d}\left(  1-cNM^{-\alpha}-cNM^{-d}\right)  \geqslant c_{2}NM^{d}.
\end{align*}

\end{proof}

\begin{proof}
[Proof of Theorem \ref{Thm 1}]By Plancherel identity we have%
\[
\int_{\mathbb{R}^{d}}\left\vert \mathcal{D}_{N}\left(  x,\tau,\sigma\right)
\right\vert ^{2}dx=\int_{\mathbb{R}^{d}}\left\vert \widehat{\mathcal{D}}%
_{N}\left(  \xi,\tau,\sigma\right)  \right\vert ^{2}d\xi.
\]
Let $\gamma$ and $\delta$ be as in Lemma \ref{Lemma Leo}. Then, by Lemma
\ref{Lemma Leo} point 2), for every $M\geqslant1$ we have%
\begin{align*}
&  \int_{\gamma}^{\delta}\int_{SO\left(  d\right)  }\int_{\mathbb{R}^{d}%
}\left\vert \widehat{\mathcal{D}}_{N}\left(  \xi,\tau,\sigma\right)
\right\vert ^{2}d\xi d\sigma d\tau\\
= &  \int_{\mathbb{R}^{d}}\left\vert \sum_{j=1}^{N}e^{-2\pi i\xi\cdot z_{j}%
}-N\widehat{\mu}\left(  \xi\right)  \right\vert ^{2}\int_{\gamma}^{\delta}%
\int_{SO\left(  d\right)  }\left\vert \widehat{\chi}_{\tau\sigma\Omega}\left(
\xi\right)  \right\vert ^{2}d\sigma d\tau d\xi\\
\geqslant &  \int_{\left\{  1\leqslant\left\vert \xi\right\vert \leqslant
M\right\}  }\left\vert \sum_{j=1}^{N}e^{-2\pi i\xi\cdot z_{j}}-N\widehat{\mu
}\left(  \xi\right)  \right\vert ^{2}\int_{\gamma}^{\delta}\int_{SO\left(
d\right)  }\left\vert \widehat{\chi}_{\tau\sigma\Omega}\left(  \xi\right)
\right\vert ^{2}d\sigma d\tau d\xi\\
\geqslant &  \int_{\left\{  1\leqslant\left\vert \xi\right\vert \leqslant
M\right\}  }\left\vert \sum_{j=1}^{N}e^{-2\pi i\xi\cdot z_{j}}-N\widehat{\mu
}\left(  \xi\right)  \right\vert ^{2}d\xi\\
&  \times\left\{  \inf_{1\leqslant\left\vert \xi\right\vert \leqslant M}%
\int_{\gamma}^{\delta}\int_{SO\left(  d\right)  }\left\vert \widehat{\chi
}_{\tau\sigma\Omega}\left(  \xi\right)  \right\vert ^{2}d\sigma d\tau\right\}
\\
&  \geqslant cM^{-d-\beta}\int_{\left\{  1\leqslant\left\vert \xi\right\vert
\leqslant M\right\}  }\left\vert \sum_{j=1}^{N}e^{-2\pi i\xi\cdot z_{j}%
}-N\widehat{\mu}\left(  \xi\right)  \right\vert ^{2}d\xi.
\end{align*}
Hence, by Lemma \ref{Lemma Cassels}, if $M=c_{1}N^{1/\alpha}$, we have%
\begin{align*}
&  \int_{a}^{b}\int_{SO\left(  d\right)  }\int_{\mathbb{R}^{d}}\left\vert
\mathcal{D}_{N}\left(  x,\tau,\sigma\right)  \right\vert ^{2}dxd\sigma d\tau\\
&  =\int_{a}^{b}\int_{SO\left(  d\right)  }\int_{\mathbb{R}^{d}}\left\vert
\widehat{\mathcal{D}}_{N}\left(  \xi,\tau,\sigma\right)  \right\vert ^{2}d\xi
d\sigma d\tau\\
&  \geqslant cM^{-d-\beta}\int_{\left\{  1\leqslant\left\vert \xi\right\vert
\leqslant M\right\}  }\left\vert \sum_{j=1}^{N}e^{-2\pi i\xi\cdot z_{j}%
}-N\widehat{\mu}\left(  \xi\right)  \right\vert ^{2}d\xi\\
&  \geqslant cM^{-d-\beta}NM^{d}=cN^{1-\beta/\alpha}.
\end{align*}

\end{proof}

\section{Proof of Theorem \ref{Thm 3}}

The characteristic function of a half-space can be obtained as a limit of
characteristic functions of balls of diverging radii. Hence we start with a
lemma on the Fourier transform on the characteristic functions $\chi_{rB}$ of
the balls $rB=\left\{  x\in\mathbb{R}^{d}:\left\vert x\right\vert <r\right\}
$.

\begin{lemma}
\label{Da sotto Ball}There exist $c_{1},c_{2}>0$ such that for $R\left\vert
\xi\right\vert \geqslant c_{1}$ we have%
\[
\frac{1}{R}\int_{R}^{2R}\left\vert \widehat{\chi}_{rB}\left(  \xi\right)
\right\vert ^{2}dr\geqslant c_{2}R^{d-1}\left\vert \xi\right\vert ^{-d-1}.
\]

\end{lemma}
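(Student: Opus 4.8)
The plan is to reduce the estimate to the classical large-argument asymptotics of Bessel functions and then to exploit the averaging in the radial parameter $r$, which is precisely what smooths out the zeros of $\widehat{\chi}_{rB}$.

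\textbf{First}, I would use that by dilation $\widehat{\chi}_{rB}(\xi)=r^{d}\widehat{\chi}_{B}(r\xi)$, together with the classical identity $\widehat{\chi}_{B}(\eta)=|\eta|^{-d/2}J_{d/2}(2\pi|\eta|)$, so that $|\widehat{\chi}_{rB}(\xi)|^{2}=r^{d}|\xi|^{-d}|J_{d/2}(2\pi r|\xi|)|^{2}$. Inserting $J_{\nu}(t)=\sqrt{2/(\pi t)}\,\cos\!\bigl(t-\tfrac{\pi}{4}(2\nu+1)\bigr)+O(t^{-3/2})$ with $\nu=d/2$ and $t=2\pi r|\xi|$, and squaring (the cross term is $O(t^{-2})$), this becomes, whenever $r|\xi|$ is large,
\[
\bigl|\widehat{\chi}_{rB}(\xi)\bigr|^{2}=\frac{1}{\pi^{2}}\,r^{d-1}|\xi|^{-d-1}\cos^{2}\!\Bigl(2\pi r|\xi|-\tfrac{(d+1)\pi}{4}\Bigr)+O\!\bigl(r^{d-2}|\xi|^{-d-2}\bigr).
\]

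\textbf{Second}, I would integrate this over $r\in[R,2R]$. Writing $\cos^{2}\theta=\tfrac12(1+\cos2\theta)$, the constant part gives $\tfrac12\int_{R}^{2R}r^{d-1}\,dr=\tfrac{2^{d}-1}{2d}R^{d}$; the oscillatory part $\int_{R}^{2R}r^{d-1}\cos(4\pi r|\xi|+\phi)\,dr$ is $O(R^{d-1}/|\xi|)$ by a single integration by parts (boundary terms and remaining integral each of that size); and the error term of the first step integrates to $O(R^{d-1}|\xi|^{-d-2})$. Collecting these,
\[
\int_{R}^{2R}\bigl|\widehat{\chi}_{rB}(\xi)\bigr|^{2}\,dr=|\xi|^{-d-1}\Bigl(\tfrac{2^{d}-1}{2d\pi^{2}}\,R^{d}+O\bigl(R^{d-1}/|\xi|\bigr)\Bigr).
\]
Dividing by $R$ and writing $R^{d-2}/|\xi|=R^{d-1}/(R|\xi|)$, it then suffices to choose $c_{1}$ so large that for $R|\xi|\ge c_{1}$ the error is at most half of the main term (the same smallness makes $2\pi r|\xi|\ge2\pi c_{1}$ uniformly large on $[R,2R]$, as the first step requires), which yields $\tfrac1R\int_{R}^{2R}|\widehat{\chi}_{rB}(\xi)|^{2}\,dr\ge c_{2}R^{d-1}|\xi|^{-d-1}$.

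The only real obstacle is technical rather than conceptual: at a fixed radius the factor $\cos^{2}(2\pi r|\xi|-\cdot)$ can vanish, so $|\widehat{\chi}_{rB}(\xi)|^{2}$ itself can be zero — which is exactly why the lemma is stated with an average in $r$. The dyadic interval $[R,2R]$ has length $R$ and hence contains $\gtrsim c_{1}$ periods $\tfrac1{2|\xi|}$ of the oscillation, and the integration-by-parts bound in the second step is the quantitative form of the statement that the average of $\cos^{2}$ is bounded below by a positive constant. Keeping careful track of the powers of $R$ and $|\xi|$ in the several $O$-terms is the bookkeeping that makes the choice of $c_{1}$ (and hence $c_{2}$) explicit.
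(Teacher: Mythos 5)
Your proposal is correct and follows essentially the same route as the paper: express $\widehat{\chi}_{rB}$ via the Bessel function $J_{d/2}$, insert the large-argument asymptotics, and use the average over the dyadic interval $[R,2R]$ (which contains many oscillation periods once $R|\xi|\geqslant c_{1}$) to get a positive lower bound. Your version merely makes the final averaging step more explicit, via the identity $\cos^{2}\theta=\tfrac12(1+\cos 2\theta)$ and an integration by parts, where the paper changes variables to $u=r|\xi|$ and asserts the lower bound for the averaged square directly.
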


\begin{proof}
The Fourier transform of $\chi_{rB}\left(  x\right)  $ can be expressed in
terms of a Bessel function,%
\[
\widehat{\chi}_{rB}\left(  \xi\right)  =r^{d}\widehat{\chi}_{B}\left(
r\xi\right)  =r^{d/2}\left\vert \xi\right\vert ^{-d/2}J_{d/2}\left(  2\pi
r\left\vert \xi\right\vert \right)  .
\]
Bessel functions have the asymptotic expansion%
\[
J_{d/2}\left(  2\pi u\right)  =\pi^{-1}u^{-1/2}\cos\left(  2\pi u-\left(
d+1\right)  \frac{\pi}{4}\right)  +E_{d}\left(  u\right)
\]
with $\left\vert E_{d}\left(  u\right)  \right\vert \leqslant c_{d}\left\vert
u\right\vert ^{-3/2}$ (see e.g. Lemma 3.11 in SW). Then, if $R\left\vert
\xi\right\vert \geqslant c_{1}$ with $c_{1}$ sufficiently large,%
\begin{align*}
&  \frac{1}{R}\int_{R}^{2R}\left\vert \widehat{\chi}_{rB}\left(  \xi\right)
\right\vert ^{2}dr=\frac{1}{R}\int_{R}^{2R}r^{d}\left\vert \xi\right\vert
^{-d}\left\vert J_{d/2}\left(  2\pi r\left\vert \xi\right\vert \right)
\right\vert ^{2}dr\\
= &  \left\vert \xi\right\vert ^{-2d}\frac{1}{R\left\vert \xi\right\vert }%
\int_{R\left\vert \xi\right\vert }^{2R\left\vert \xi\right\vert }%
u^{d}\left\vert J_{d/2}\left(  2\pi u\right)  \right\vert ^{2}du\\
= &  \left\vert \xi\right\vert ^{-2d}\frac{1}{R\left\vert \xi\right\vert }%
\int_{R\left\vert \xi\right\vert }^{2R\left\vert \xi\right\vert }%
u^{d-1}\left\vert \pi^{-1}\cos\left(  2\pi u-\left(  d+1\right)  \frac{\pi}%
{4}\right)  +u^{1/2}E_{d}\left(  u\right)  \right\vert ^{2}du\\
\geqslant &  \left\vert \xi\right\vert ^{-2d}\left(  R\left\vert
\xi\right\vert \right)  ^{d-1}\\
&  \times\frac{1}{R\left\vert \xi\right\vert }\int_{R\left\vert \xi\right\vert
}^{2R\left\vert \xi\right\vert }\left\vert \pi^{-1}\cos\left(  2\pi u-\left(
d+1\right)  \frac{\pi}{4}\right)  +u^{1/2}E_{d}\left(  u\right)  \right\vert
^{2}du\\
\geqslant &  \,c\,R^{d-1}\left\vert \xi\right\vert ^{-d-1}.
\end{align*}

\end{proof}

In the next lemma we estimate the discrepancy associated to the family of
balls $x+rB$, where $B$ is the unit ball centered at the origin. With a small
change of the previous notation, for any $x\in\mathbb{R}^{d}$ and $r>0$ we set%
\[
\mathcal{D}_{N}\left(  x,r\right)  =\mathrm{card}\left(  \mathcal{P}_{N}%
\cap\left(  x+rB\right)  \right)  -N\mu\left(  x+rB\right)  .
\]

\begin{lemma}
Under the assumption of Theorem \ref{Thm 3} there exists $c>0$ such that for
$R$ large enough%
\[
\frac{1}{R}\int_{R}^{2R}\int_{\mathbb{R}^{d}}\left\vert \mathcal{D}_{N}\left(
x,r\right)  \right\vert ^{2}dxdr\geqslant cR^{d-1}N^{1-1/\alpha}.
\]

\end{lemma}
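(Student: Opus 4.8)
The plan is to imitate the proof of Theorem \ref{Thm 1}, replacing the role of Lemma \ref{Lemma Leo} (which controlled the averaged Fourier transform of dilated copies of $\Omega$) by Lemma \ref{Da sotto Ball}, which controls the averaged Fourier transform of dilated balls on the scale $[R,2R]$. First I would record, as in Lemma \ref{Lemma trasf discrep}, that for fixed $r$ the function $x\mapsto\mathcal{D}_{N}(x,r)$ equals $\chi_{-rB}\ast\bigl(\sum_{j}\delta_{z_{j}}-N\mu\bigr)$, so by Plancherel
\[
\int_{\mathbb{R}^{d}}\left\vert \mathcal{D}_{N}\left(  x,r\right)  \right\vert
^{2}dx=\int_{\mathbb{R}^{d}}\left\vert \sum_{j=1}^{N}e^{-2\pi i\xi\cdot z_{j}
}-N\widehat{\mu}\left(  \xi\right)  \right\vert ^{2}\left\vert \widehat{\chi
}_{rB}\left(  \xi\right)  \right\vert ^{2}d\xi .
\]
Averaging in $r$ over $[R,2R]$ and interchanging integrals gives
\[
\frac{1}{R}\int_{R}^{2R}\int_{\mathbb{R}^{d}}\left\vert \mathcal{D}_{N}\left(
x,r\right)  \right\vert ^{2}dxdr=\int_{\mathbb{R}^{d}}\left\vert \sum
_{j=1}^{N}e^{-2\pi i\xi\cdot z_{j}}-N\widehat{\mu}\left(  \xi\right)
\right\vert ^{2}\left(  \frac{1}{R}\int_{R}^{2R}\left\vert \widehat{\chi}
_{rB}\left(  \xi\right)  \right\vert ^{2}dr\right)  d\xi .
\]

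Next I would restrict the $\xi$-integration to the annulus $\{c_{1}/R\leqslant\left\vert\xi\right\vert\leqslant M\}$ for a parameter $M$ to be chosen, since the integrand is nonnegative everywhere. On that annulus Lemma \ref{Da sotto Ball} applies (the hypothesis $R\left\vert\xi\right\vert\geqslant c_{1}$ holds), so the inner average is bounded below by $c\,R^{d-1}\left\vert\xi\right\vert^{-d-1}\geqslant c\,R^{d-1}M^{-d-1}$ throughout the annulus. This yields
\[
\frac{1}{R}\int_{R}^{2R}\int_{\mathbb{R}^{d}}\left\vert \mathcal{D}_{N}\left(
x,r\right)  \right\vert ^{2}dxdr\geqslant c\,R^{d-1}M^{-d-1}\int_{\left\{
c_{1}/R\leqslant\left\vert \xi\right\vert \leqslant M\right\}  }\left\vert
\sum_{j=1}^{N}e^{-2\pi i\xi\cdot z_{j}}-N\widehat{\mu}\left(  \xi\right)
\right\vert ^{2}d\xi .
\]
Now I would want to invoke Lemma \ref{Lemma Cassels}, which gives the lower bound $c_{2}NM^{d}$ for the integral over $\{1\leqslant\left\vert\xi\right\vert\leqslant M\}$ provided $M\geqslant c_{1}N^{1/\alpha}$. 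Choosing $M=c_{1}N^{1/\alpha}$ and taking $R$ large enough that $c_{1}/R\leqslant 1$, the annulus $\{c_{1}/R\leqslant\left\vert\xi\right\vert\leqslant M\}$ contains $\{1\leqslant\left\vert\xi\right\vert\leqslant M\}$, so the displayed integral is at least $c_{2}NM^{d}$. Substituting gives the bound $c\,R^{d-1}M^{-d-1}NM^{d}=c\,R^{d-1}N M^{-1}=c\,R^{d-1}N\cdot N^{-1/\alpha}=c\,R^{d-1}N^{1-1/\alpha}$, which is exactly the claim.

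The main point requiring care is the matching of the two $\xi$-ranges: Lemma \ref{Da sotto Ball} needs $\left\vert\xi\right\vert\gtrsim 1/R$ while Lemma \ref{Lemma Cassels} is stated for $\left\vert\xi\right\vert\geqslant 1$, so one must take $R$ large (the lemma only claims the estimate "for $R$ large enough," which gives us this freedom) to ensure $c_{1}/R\leqslant 1$ and hence that the Cassels--Montgomery annulus sits inside the region where the Bessel lower bound is valid. A secondary routine point is justifying the interchange of the $r$- and $\xi$-integrals, which is immediate from Tonelli since everything is nonnegative, together with the observation that $\mathcal{D}_{N}(x,r)$ is compactly supported in $x$ for each fixed $r$ (because $E$ and the $z_j$ lie in $B(0,r_0)$ and $r$ ranges over a bounded interval), so all integrals are finite and Plancherel applies without issue.
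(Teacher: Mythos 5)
Your proposal is correct and follows essentially the same route as the paper: Plancherel plus Lemma \ref{Lemma trasf discrep}, the Bessel-function lower bound of Lemma \ref{Da sotto Ball} on the annulus $\{1\leqslant\left\vert\xi\right\vert\leqslant M\}$ (valid since $R\left\vert\xi\right\vert\geqslant c_{1}$ for $R$ large), and then Lemma \ref{Lemma Cassels} with $M\approx N^{1/\alpha}$. The only cosmetic difference is that you phrase the restriction via the annulus $\{c_{1}/R\leqslant\left\vert\xi\right\vert\leqslant M\}$ before shrinking to $\{1\leqslant\left\vert\xi\right\vert\leqslant M\}$, which the paper does directly.
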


\begin{proof}
By Lemma \ref{Lemma trasf discrep} we have%
\[
\widehat{\mathcal{D}}_{N}\left(  \xi,r\right)  =\left\{  \sum_{j=1}%
^{N}e^{-2\pi i\xi\cdot z_{j}}-N\widehat{\mu}\left(  \xi\right)  \right\}
\overline{\widehat{\chi}_{rB}\left(  \xi\right)  }.
\]
Then, by the Plancherel identity and Lemma \ref{Lemma Cassels} with
$M=cN^{1/\alpha}$, we have%
\begin{align*}
&  \frac{1}{R}\int_{R}^{2R}\int_{\mathbb{R}^{d}}\left\vert \mathcal{D}%
_{N}\left(  x,r\right)  \right\vert ^{2}dxdr=\frac{1}{R}\int_{R}^{2R}%
\int_{\mathbb{R}^{d}}\left\vert \widehat{\mathcal{D}}_{N}\left(  \xi,r\right)
\right\vert ^{2}d\xi dr\\
&  =\frac{1}{R}\int_{R}^{2R}\int_{\mathbb{R}^{d}}\left\vert \sum_{j=1}%
^{N}e^{-2\pi i\xi\cdot z_{j}}-\widehat{\mu}\left(  \xi\right)  \right\vert
^{2}\left\vert \widehat{\chi}_{rB}\left(  \xi\right)  \right\vert ^{2}d\xi
dr\\
&  \geqslant\int_{\left\{  1\leqslant\left\vert \xi\right\vert \leqslant
M\right\}  }\left\vert \sum_{j=1}^{N}e^{-2\pi i\xi\cdot z_{j}}-N\widehat{\mu
}\left(  \xi\right)  \right\vert ^{2}\left\{  \frac{1}{R}\int_{R}%
^{2R}\left\vert \widehat{\chi}_{rB}\left(  \xi\right)  \right\vert
^{2}dr\right\}  d\xi\\
&  \geqslant cR^{d-1}M^{-d-1}\int_{\left\{  1\leqslant\left\vert
\xi\right\vert \leqslant M\right\}  }\left\vert \sum_{j=1}^{N}e^{2\pi
i\xi\cdot z_{j}}-N\overline{\widehat{\mu}\left(  \xi\right)  }\right\vert
^{2}d\xi\\
&  \geqslant cR^{d-1}M^{-d-1}NM^{d}\geqslant cR^{d-1}N^{1-1/\alpha}.
\end{align*}

\end{proof}

\begin{proof}
[Proof of Theorem \ref{Thm 3}]Let $r>r_{0}$. Since both the set $E$ and the
points $\mathcal{P}_{N}$ are contained in the ball $B\left(  0,r_{0}\right)
$, if $\left\vert x\right\vert <r-r_{0}$ then $B\left(  0,r_{0}\right)
\subset B\left(  x,r\right)  $ and one can easily check that%
\[
\mathcal{D}_{N}\left(  x,r\right)  =0.
\]
Similarly, if $\left\vert x\right\vert >r+r_{0}$, then $B\left(
0,r_{0}\right)  \cap B\left(  x,r\right)  =\varnothing$ and also in this case
we have $\mathcal{D}_{N}\left(  x,r\right)  =0$. Then the previous lemma
gives, for $R>r_{0}$,%
\begin{align*}
& c_{1}\,R^{d-1}N^{1-1/\alpha}   \leqslant\frac{1}{R}\int_{R}^{2R}%
\int_{\mathbb{R}^{d}}\left\vert \mathcal{D}_{N}\left(  x,r\right)  \right\vert
^{2}dxdr\\
  = & \frac{1}{R}\int_{R}^{2R}\int_{\left\{  r-r_{0}\leqslant\left\vert
x\right\vert \leqslant r+r_{0}\right\}  }\left\vert \mathcal{D}_{N}\left(
x,r\right)  \right\vert ^{2}dxdr\\
  = & \frac{1}{R}\int_{R}^{2R}\int_{r-r_{0}}^{r+r_{0}}\int_{\Sigma_{d-1}%
}\left\vert \mathcal{D}_{N}\left(  \rho\Theta,r\right)  \right\vert ^{2}%
\rho^{d-1}d\rho d\Theta dr\\
  \leqslant & \frac{1}{R}\left(  2R+r_{0}\right)  ^{d-1}\int_{R}^{2R}%
\int_{r-r_{0}}^{r+r_{0}}\int_{\Sigma_{d-1}}\left\vert \mathcal{D}_{N}\left(
\rho\Theta,r\right)  \right\vert ^{2}d\rho d\Theta dr\\
  = & \frac{1}{R}\left(  2R+r_{0}\right)  ^{d-1}\int_{R}^{2R}\int_{-r_{0}%
}^{r_{0}}\int_{\Sigma_{d-1}}\left\vert \mathcal{D}_{N}\left(  \left(
\rho+r\right)  \Theta,r\right)  \right\vert ^{2}d\rho d\Theta dr.
\end{align*}

Then, there exists $c>0$ such that for every $R$ large enough,
\begin{align*}
c\,N^{1-1/\alpha}  &  \leqslant\int_{-r_{0}}^{r_{0}}\int_{\Sigma_{d-1}}\left[
\frac{1}{R}\int_{R}^{2R}\left\vert \mathcal{D}_{N}\left(  \left(
\rho+r\right)  \Theta,r\right)  \right\vert ^{2}dr\right]  d\Theta d\rho\\
&  =\int_{-r_{0}}^{r_{0}}\int_{\Sigma_{d-1}}\left[  \int_{1}^{2}\left\vert
\mathcal{D}_{N}\left(  \left(  \rho+\omega R\right)  \Theta,\omega R\right)
\right\vert ^{2}d\omega\right]  d\Theta d\rho.
\end{align*}
We claim that as $R\rightarrow+\infty$ the discrepancy associated to these
balls converges to the discrepancy associated to half-spaces, that is we claim
that%
\begin{align*}
&  \lim_{R\rightarrow+\infty}\int_{-r_{0}}^{r_{0}}\int_{\Sigma_{d-1}}\left[
\int_{1}^{2}\left\vert \mathcal{D}_{N}\left(  \left(  \rho+\omega R\right)
\Theta,\omega R\right)  \right\vert ^{2}d\omega\right]  d\Theta d\rho\\
&  =\int_{-r_{0}}^{r_{0}}\int_{\Sigma_{d-1}}\left\vert \mathcal{D}_{N}\left(
\Pi\left(  \Theta,\rho\right)  \right)  \right\vert ^{2}d\Theta d\rho.
\end{align*}

Observe that%
\begin{align*}
B\left(  \left(  \rho+\omega R\right)  \Theta,\omega R\right)   &  =\left\{
x:\left\vert x-\rho\Theta-\omega R\Theta\right\vert ^{2}<\omega^{2}%
R^{2}\right\} \\
&  =\left\{  x:\left\vert x-\rho\Theta\right\vert ^{2}<2\omega R\left(
x-\rho\Theta\right)  \cdot\Theta\right\} \\
&  =\left\{  x:\frac{\left\vert x-\rho\Theta\right\vert ^{2}}{2\omega R}%
+\rho<x\cdot\Theta\right\}  .
\end{align*}
Hence, for every $x\in\mathbb{R}^{d}$,%
\[
\lim_{R\rightarrow+\infty}\chi_{B\left(  \left(  \rho+\omega R\right)
\Theta,\omega R\right)  }\left(  x\right)  =\chi_{\Pi\left(  \Theta
,\rho\right)  }\left(  x\right)  .
\]
Then, for every $\Theta$, $\rho$ and $\omega$, we have
\begin{align*}
&  \lim_{R\rightarrow+\infty}\mathcal{D}_{N}\left(  \left(  \rho+\omega
R\right)  \Theta,\omega R\right) \\
&  =\lim_{R\rightarrow+\infty}\left[  \sum_{j=1}^{N}\chi_{B\left(  \left(
\rho+\omega R\right)  \Theta,\omega R\right)  }\left(  z_{j}\right)
-N\mu\left(  B\left(  \left(  \rho+\omega R\right)  \Theta,\omega R\right)
\right)  \right] \\
&  =\sum_{j=1}^{N}\chi_{\Pi\left(  \Theta,\rho\right)  }\left(  z_{j}\right)
-N\mu\left(  \Pi\left(  \Theta,\rho\right)  \right)  =\mathcal{D}_{N}\left(
\Pi\left(  \Theta,\rho\right)  \right)  .
\end{align*}

By the dominate convergence theorem we have%
\begin{align*}
&  \lim_{R\rightarrow+\infty}\int_{-r_{0}}^{r_{0}}\int_{\Sigma_{d-1}}\left[
\int_{1}^{2}\left\vert \mathcal{D}_{N}\left(  \left(  \rho+\omega R\right)
\Theta,\omega R\right)  \right\vert ^{2}d\omega\right]  d\Theta d\rho\\
&  =\lim_{R\rightarrow+\infty}\int_{-r_{0}}^{r_{0}}\int_{\Sigma_{d-1}%
}\left\vert \mathcal{D}_{N}\left(  \Pi\left(  \Theta,\rho\right)  \right)
\right\vert ^{2}d\Theta d\rho.
\end{align*}

\end{proof}

\section{Appendix}

\begin{figure}[ptb]
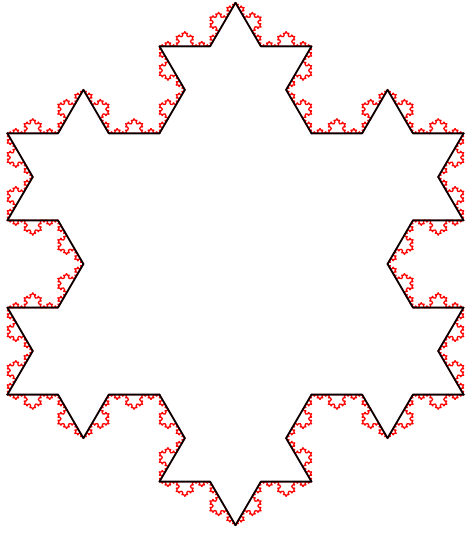

% [inline block 0: 1 envs, 67903 chars -> data_tex | \begin{tikzpicture}[line cap=round,line join=round,>=triangle 45,x=2cm,y=2cm] \tikzset{l1/.style={line width=0.5pt,color...]

\caption{The snowflake}%
\label{fig:snowslake}%
\end{figure}Let $C$ be the snowflake curve constructed in Example
\ref{example snowflake}. The next theorem is a particular case of Theorems
\ref{Thm 1} and \ref{Thm 3} when both the support $E$ of the measure $\mu$ and
the set $\Omega$ used to define the discrepancy coincide with the interior of
the snowflake.

\begin{theorem}
Let $E=\Omega$ be the open bounded set whose boundary is the snowflake curve
$C$ and let $\mu$ be the Borel measure on $\mathbb{R}^{d}$ defined by%
\[
\mu\left(  F\right)  =\frac{\left\vert F\cap E\right\vert }{\left\vert
E\right\vert }\text{.}%
\]
i) There exist positive constants $a,b$ and $c$ such that for every
distribution $\mathcal{P}_{N}=\left\{  z_{1},z_{2},\ldots z_{N}\right\}  $ of
$N$ points we have%
\[
\left\{  \int_{a}^{b}\int_{SO\left(  d\right)  }\int_{\mathbb{R}^{d}%
}\left\vert \mathcal{D}_{N}\left(  x+\tau\sigma\Omega\right)  \right\vert
^{2}dxd\sigma d\tau\right\}  ^{1/2}\geqslant c\,N^{\frac{\log_{3}4}{4}}.
\]
ii) For every $0<a<b$ there exists $c>0$ such that for every $N$ there exists
a distribution $\mathcal{P}_{N}=\left\{  z_{1},z_{2},\ldots z_{N}\right\}
\subseteq E$ of $N$ points such that%
\[
\left\{  \int_{a}^{b}\int_{SO\left(  d\right)  }\int_{\mathbb{R}^{d}%
}\left\vert \mathcal{D}_{N}\left(  x+\tau\sigma\Omega\right)  \right\vert
^{2}dxd\sigma d\tau\right\}  ^{1/2}\leqslant c\,N^{\frac{\log_{3}4}{4}}.
\]

\end{theorem}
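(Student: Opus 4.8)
The strategy is to obtain both inequalities from the general results already proved: part (i) will be Theorem \ref{Thm 1} and part (ii) the upper-bound theorem established above, once we verify that the pair $(E,\Omega)=(E,E)$ (here $d=2$) satisfies their hypotheses with $\alpha=2$ and $\beta=2-\log_{3}4\in(0,1)$. Note that $\tfrac12-\tfrac{\beta}{2\alpha}=\tfrac12-\tfrac{2-\log_{3}4}{4}=\tfrac{\log_{3}4}{4}$, so both exponents are the claimed one.

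Everything rests on a two-sided Minkowski estimate for the snowflake curve $C$: there are $0<c_{1}\leq c_{2}$ with
\[
c_{1}\,t^{\,2-\log_{3}4}\leq\bigl|\{z\in\mathbb{R}^{2}:\operatorname{dist}(z,C)\leq t\}\bigr|\leq c_{2}\,t^{\,2-\log_{3}4},\qquad 0<t<1 .
\]
I would prove this by the usual scaling bookkeeping. For the upper bound, when $t\in[3^{-n-1},3^{-n}]$ the $t$-tube about $C$ is covered by the $3\cdot4^{n}$ translates of a fixed $O(3^{-n})$-ball, one over each edge of $C_{n}$, so its area is $\lesssim 4^{n}3^{-2n}=(4/9)^{n}\asymp t^{\,2-\log_{3}4}$. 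For the lower bound, pick $\asymp 4^{n}$ points of $C$ that are $\asymp t$-separated (using the Ahlfors regularity of the measure on $C$ recalled in Example \ref{example snowflake}); the disjoint $t/2$-balls about them lie in the tube. With this the routine hypotheses are immediate: $\mu(B(x,r))=|B(x,r)\cap E|/|E|\leq\pi r^{2}/|E|$ is (\ref{Ahlfors}) with $\alpha=2$; the inclusion $(h+E)\bigtriangleup E\subseteq\{z:\operatorname{dist}(z,\partial E)\leq|h|\}$ of Remark \ref{Remark Minkowski} together with the upper Minkowski bound gives (\ref{Diff simm da sopra}) with $\beta=2-\log_{3}4$, which is simultaneously condition (\ref{Finite minkowski content}) needed for part (ii); and the matching lower bound $\mu(B(x,r))\geq c\,r^{2}$ for $x\in E$, required by the upper-bound theorem, holds because the Koch curve is a quasicircle, so $E$ is a quasidisk, hence a John domain — whence $E$ contains a ball of radius $\asymp r$ within distance $r$ of any $x\in E$, giving $|B(x,2r)\cap E|\gtrsim r^{2}$ (this is also visible directly from the self-similar bump structure).

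The one point of substance is the lower bound (\ref{Diff simm da sotto}): to produce a direction $\overline{\Theta}$ and a decreasing sequence $t_{n}\to0$ satisfying (\ref{non lacunare}) with $|(t_{n}\overline{\Theta}+E)\bigtriangleup E|\geq\kappa_{2}\,t_{n}^{\,2-\log_{3}4}$. Take $\overline{\Theta}$ to be the outward unit normal of one side of the initial triangle and $t_{n}=\lambda\,3^{-n}$ for a fixed constant $\lambda$ (so (\ref{non lacunare}) holds with $\kappa_{3}=3$). At stage $n$ the construction glues an equilateral triangle of side $3^{-n}$, pointing outward, onto the middle third of each of the $3\cdot4^{n}$ edges of $C_{n-1}$; since the edge directions of the $C_{m}$ evolve by a primitive substitution with Perron eigenvalue $4$, a positive proportion — hence $\gtrsim 4^{n}$ — of these added triangles $\Delta$ have base edge with outward normal equal to $\overline{\Theta}$, and their total area is $\asymp 4^{n}3^{-2n}\asymp(4/9)^{n}\asymp t_{n}^{\,2-\log_{3}4}$. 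For such a $\Delta$, everything of $E$ lying on the $+\overline{\Theta}$ side of its base within $O(3^{-n})$ is confined to a slab of width $O(3^{-n})$, since past the apex of $\Delta$ and past the sub-$3^{-n}$ fringe decorating its two free sides one is in $E^{c}$; hence for $\lambda$ a large enough fixed constant, translating by $t_{n}\overline{\Theta}$ pushes $\Delta$ entirely off $E$. The translates of the retained triangles being pairwise disjoint, summing yields $|(t_{n}\overline{\Theta}+E)\bigtriangleup E|\geq\sum_{\Delta}|\Delta|\gtrsim(4/9)^{n}$.

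The hard part is exactly this confinement step: one must exclude that a translate $t_{n}\overline{\Theta}+\Delta$ slips back into $E$, either through a distant lobe or — the genuinely delicate case — through a re-entrant notch of the snowflake at a scale coarser than $3^{-n}$, since the overhangs of the Koch curve keep $E$ from being monotone along $\overline{\Theta}$. I would control this by (a) the choice of $\overline{\Theta}$ as a boundary-normal direction, so the retained $\Delta$'s are pushed straight out along their axes; (b) observing that notches visible at scale $3^{-n}$ have size $O(3^{-n})$, so a shift by a large fixed multiple of $3^{-n}$ overshoots them; and (c) for coarser notches, discarding the bounded proportion of triangles lying in their immediate vicinity and checking that the remaining family still has total area $\gtrsim(4/9)^{n}$ — this last count being the technical crux of the appendix. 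Once (\ref{Diff simm da sopra}), (\ref{Diff simm da sotto}) and the two Ahlfors estimates are in place, part (i) follows from Theorem \ref{Thm 1} and part (ii) from the upper-bound theorem, applied verbatim.
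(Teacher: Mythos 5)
Your reduction to Theorem \ref{Thm 1} and to the upper-bound theorem, the exponent arithmetic, the upper Minkowski estimate for the tube around $C$, and the verification of (\ref{Alfhors regular}) for part (ii) (a detail the paper leaves implicit) are all fine. The genuine gap is exactly where you place it yourself: the verification of (\ref{Diff simm da sotto}). Your argument requires the confinement claim that, for $\gtrsim 4^{n}$ of the stage-$n$ triangles $\Delta$ with base normal $\overline{\Theta}$, the translate $t_{n}\overline{\Theta}+\Delta$ lies entirely in $E^{c}$, and this is never proved. The sketched mechanism does not obviously work: $\overline{\Theta}$ is one fixed direction, while the exterior cone directions of the snowflake vary from point to point, so a triangle whose base normal is $\overline{\Theta}$ but which sits deep in a fjord between two coarser-scale lobes may be pushed straight across the fjord and back into $E$; ``overshooting'' a notch of size $O(3^{-n})$ lands you precisely on its far side, which can again be $E$, and for any fixed $\lambda$ there are locations where the opposite wall of a coarser fjord lies at distance between $3^{-n}$ and $\lambda 3^{-n}$. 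Your step (c), discarding the triangles near such configurations and still retaining a positive proportion, is exactly the count that would have to be carried out; calling it ``the technical crux'' and leaving it unproved means the key hypothesis of Theorem \ref{Thm 1} is not established.

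The paper closes this step without any confinement analysis. It takes $\overline{\Theta}=(0,1)$, $t_{n}=\frac{\sqrt{3}}{2}3^{-n}$, $h=t_{n}\overline{\Theta}$, and decomposes $\Omega=K_{n}\cup F_{n}$, where $K_{n}$ is the interior of the stage-$n$ polygon $C_{n}$ and $F_{n}$ is the union of all triangles added after stage $n$, with the explicit value $\left\vert F_{n}\right\vert =\frac{3\sqrt{3}}{20}\left(\frac{4}{9}\right)^{n}$. Using $\left(\Omega+h\right)\bigtriangleup\Omega\supseteq\left(K_{n}\bigtriangleup\left(K_{n}+h\right)\right)\setminus\left(F_{n}\cup\left(F_{n}+h\right)\right)$, the horizontal edges of $C_{n}$ alone contribute $4^{n}$ disjoint trapezoids and rectangles of area $3^{-n}t_{n}$ to $K_{n}\bigtriangleup\left(K_{n}+h\right)$, giving at least $\frac{\sqrt{3}}{2}\left(\frac{4}{9}\right)^{n}$, and subtracting $\left\vert F_{n}\right\vert+\left\vert F_{n}+h\right\vert=\frac{3\sqrt{3}}{10}\left(\frac{4}{9}\right)^{n}$ still leaves $\frac{\sqrt{3}}{5}\left(\frac{4}{9}\right)^{n}\approx t_{n}^{2-\log_{3}4}$. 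Working with the polygonal approximation and doing this two-line area bookkeeping is what replaces, and renders unnecessary, the delicate geometric counting your proposal defers.
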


\begin{proof}
i) Let us show that we can apply Theorem \ref{Thm 1} to this setting with
$\alpha=2$ and $\beta=2-\log_{3}4$. Clearly (\ref{Ahlfors}) holds true with
$\alpha=2$. We only have to prove (\ref{Diff simm da sopra}) and
(\ref{Diff simm da sotto}).

Let us show that $\Omega$ has finite $\left(  2-\log4/\log3\right)
$-dimensional Minkowski content. Given $0<t<1$ let $n$ be such that
$3^{-n-1}\leqslant t<3^{-n}$. The construction of $C$ shows that for every
$y\in C$ there exists $z\in C_{n}$ such that $\operatorname*{dist}\left(
y,z\right)  \leqslant3^{-n}$. Then%
\[
\left\{  x\in\mathbb{R}^{d}:d\left(  x,C\right)  \leqslant t\right\}
\subset\left\{  x\in\mathbb{R}^{d}:d\left(  x,C_{n}\right)  \leqslant
2\cdot3^{-n}\right\}  .
\]
Since $C_{n}$ has length $3\cdot\left(  \frac{4}{3}\right)  ^{n}$,
\begin{align*}
\left\vert \left\{  x\in\mathbb{R}^{d}:d\left(  x,C\right)  \leqslant
t\right\}  \right\vert  &  \leqslant\left\vert \left\{  x\in\mathbb{R}%
^{d}:d\left(  x,C_{n}\right)  \leqslant2\cdot3^{-n}\right\}  \right\vert \\
&  \leqslant3\cdot\left(  \frac{4}{3}\right)  ^{n}\cdot\left(  2\cdot
3^{-n}\right)  \leqslant6t^{2-\frac{\log4}{\log3}}.
\end{align*}

By Remark \ref{Remark Minkowski} $\Omega$ satisfies (\ref{Diff simm da sopra})
with $\beta=\frac{\log4}{\log3}$.

It remains to show that there exists a decreasing sequence $t_{n}$ satisfying
(\ref{non lacunare}) and a direction $\overline{\Theta}$ such that
\[
\kappa_{2}\,t_{n}^{2-\log_{3}4.}\leqslant\left\vert \left(  t_{n}%
\overline{\Theta}+\Omega\right)  \bigtriangleup\Omega\right\vert .
\]
Let $\overline{\Theta}=\left(  0,1\right)  $, let $t_{n}=\sqrt{3}/2\cdot
3^{-n}$ and let $h=t_{n}\left(  0,1\right)  $. We stop the construction of the
snowflake curve $C$ at the step $n$ we can write%
\[
\Omega=K_{n}\cup F_{n}%
\]
where $K_{n}$ is an open bounded set whose boundary is $C_{n}$ and $F_{n}$ is
a bounded set which is the disjoint union of triangles of side lengths
$3^{-k-1}$ for $k\geqslant n$. More precisely for every $k\geqslant n$,
$F_{n}$ contains $3\cdot4^{k}$ disjoint triangles of side lengths $3^{-k-1}$.
The measure of $F_{n}$ is therefore given by%
\[
\left\vert F_{n}\right\vert =\sum_{k=n}^{+\infty}3\cdot4^{k}\frac{1}%
{2}3^{-k-1}\frac{\sqrt{3}}{2}3^{-k-1}=\frac{3\sqrt{3}}{20}\left(  \frac{4}%
{9}\right)  ^{n}.
\]
To estimate from below the size of $\left\vert \left(  \Omega+h\right)
\bigtriangleup\Omega\right\vert $ observe that
\[
\left(  \Omega+h\right)  \bigtriangleup\Omega\supseteq\left(  K_{n}%
\bigtriangleup\left(  K_{n}+h\right)  \right)  \setminus\left(  F_{n}%
\cup\left(  F_{n}+h\right)  \right)  .
\]
The part of $K_{n}\bigtriangleup\left(  K_{n}+h\right)  $ originating from the
horizontal segments contains $4^{n}$ disjoint trapezoids and rectangles of
area $3^{-n}t_{n}=\sqrt{3}/2\cdot9^{-n}$ (see figure \ref{fig:sim diff}).
Hence%
\[
\left\vert K_{n}\bigtriangleup\left(  K_{n}+h\right)  \right\vert
\geqslant\frac{\sqrt{3}}{2}\left(  \frac{4}{9}\right)  ^{n}.
\]
Since $\left\vert F_{n}\right\vert =\left\vert F_{n}+h\right\vert
=\frac{3\sqrt{3}}{20}\left(  \frac{4}{9}\right)  ^{n}$, then we have%
\begin{align*}
\left\vert \Omega\bigtriangleup\left(  \Omega+h\right)  \right\vert  &
\geqslant\left\vert K_{n}\bigtriangleup\left(  K_{n}+h\right)  \right\vert
-\left\vert F_{n}\right\vert -\left\vert F_{n}+h\right\vert \\
&  \geqslant\frac{\sqrt{3}}{2}\left(  \frac{4}{9}\right)  ^{n}-\frac{3\sqrt
{3}}{10}\left(  \frac{4}{9}\right)  ^{n}=\frac{\sqrt{3}}{5}\left(  \frac{4}%
{9}\right)  ^{n}\\
&  =\frac{\sqrt{3}}{5}\left(  2/\sqrt{3}\right)  ^{\log_{3}4-2}t_{n}%
^{2-\log_{3}4}.
\end{align*}

\end{proof}

\begin{figure}[ptb]
\begin{tikzpicture}[line cap=round,line join=round,>=triangle 45,x=2cm,y=2cm]
\tikzset{l1/.style={line width=0.5pt,color=red}}
% qui i due insiemi
\draw[line width=0.5pt,color=black] (0,0)-- (0.333,0)-- (0.5,0.289)-- (0.667,0)-- (1,0);
\draw[line width=0.5pt,color=black] (1,0)-- (1.167,0.289)-- (1,0.577)-- (1.333,0.577)-- (1.5,0.866);
\draw[line width=0.5pt,color=black] (1.5,0.866)-- (1.667,0.577)-- (2,0.577)-- (1.833,0.289)-- (2,0);
\draw[line width=0.5pt,color=black] (2,0)-- (2.333,0)-- (2.5,0.289)-- (2.667,0)-- (3,0);
\draw[line width=0.5pt,color=black] (1.5,-2.598)-- (1.333,-2.309)-- (1,-2.309)-- (1.167,-2.021)-- (1,-1.732);
\draw[line width=0.5pt,color=black] (1,-1.732)-- (0.667,-1.732)-- (0.5,-2.021)-- (0.333,-1.732)-- (0,-1.732);
\draw[line width=0.5pt,color=black] (0,-1.732)-- (0.167,-1.443)-- (0,-1.155)-- (0.333,-1.155)-- (0.5,-0.866);
\draw[line width=0.5pt,color=black] (0.5,-0.866) -- (0.333,-0.577) -- (0,-0.577)-- (0.167,-0.289)-- (0,0);
\draw[line width=0.5pt,color=black] (3,0)-- (2.833,-0.289)-- (3,-0.577)-- (2.667,-0.577)-- (2.5,-0.866);
\draw[line width=0.5pt,color=black] (2.5,-0.866)-- (2.667,-1.155)-- (3,-1.155)-- (2.833,-1.443)-- (3,-1.732);
\draw[line width=0.5pt,color=black] (3,-1.732)-- (2.667,-1.732)-- (2.5,-2.021)-- (2.333,-1.732)-- (2,-1.732);
\draw[line width=0.5pt,color=black] (2,-1.732)-- (1.833,-2.021)-- (2,-2.309)-- (1.667,-2.309)-- (1.5,-2.598);
\draw[l1] (0,-0.13)-- (0.333,-0.13)-- (0.5,0.155)-- (0.667,-0.13)-- (1,-0.13);
\draw[l1] (1,-0.13)-- (1.167,0.159)-- (1,0.447)-- (1.333,0.447)-- (1.5,0.736);
\draw[l1] (1.5,0.736)-- (1.667,0.447)-- (2,0.447)-- (1.833,0.159)-- (2,-0.13);
\draw[l1] (2,-0.13)-- (2.333,-0.13)-- (2.5,0.159)-- (2.667,-0.13)-- (3,-0.13);
\draw[l1] (1.5,-2.728)-- (1.333,-2.439)-- (1,-2.439)-- (1.167,-2.151)-- (1,-1.862);
\draw[l1] (1,-1.862)-- (0.667,-1.862)-- (0.5,-2.151)-- (0.333,-1.862)-- (0,-1.862);
\draw[l1] (0,-1.862)-- (0.167,-1.573)-- (0,-1.285)-- (0.333,-1.285)-- (0.5,-0.996);
\draw[l1] (0.5,-0.996)-- (0.333,-0.707)-- (0,-0.707)-- (0.167,-0.419)-- (0,-0.13);
\draw[l1] (3,-0.13)-- (2.833,-0.419)-- (3,-0.707)-- (2.667,-0.707)-- (2.5,-0.996);
\draw[l1] (2.5,-0.996)-- (2.667,-1.285)-- (3,-1.285)-- (2.833,-1.573)-- (3,-1.862);
\draw[l1] (3,-1.862)-- (2.667,-1.862)-- (2.5,-2.151)-- (2.333,-1.862)-- (2,-1.862);
\draw[l1] (2,-1.862)-- (1.833,-2.151)-- (2,-2.439)-- (1.667,-2.439)-- (1.5,-2.728);
\draw[line width=0.5pt,color=gray,fill=gray,fill opacity=0.1] (1,0.577) -- (1.075,0.447) -- (1.333,0.447) -- (1.333,0.577) -- cycle;
% qui i 16 trapezi
\draw[line width=0.2pt,color=gray,fill=gray,fill opacity=0.1] (1.666,0.447) -- (1.666,0.577) -- (2,0.577) -- (1.925,0.447) -- cycle;
\draw[line width=0.2pt,color=gray,fill=gray,fill opacity=0.1] (0.666,0) -- (0.666,-0.13) -- (1,-0.13) -- (1,0) -- cycle;
\draw[line width=0.2pt,color=gray,fill=gray,fill opacity=0.1] (0,0) -- (0.075,-0.13) -- (0.333,-0.13) -- (0.333,0) -- cycle;
\draw[line width=0.2pt,color=gray,fill=gray,fill opacity=0.1] (2,0) -- (2,-0.13) -- (2.333,-0.13) -- (2.333,0) -- cycle;
\draw[line width=0.2pt,color=gray,fill=gray,fill opacity=0.1] (2.666,0) -- (2.666,-0.13) -- (2.924,-0.13) -- (3,0) -- cycle;
\draw[line width=0.2pt,color=gray,fill=gray,fill opacity=0.1] (1,-2.439) -- (1.075,-2.309) -- (1.333,-2.309) -- (1.333,-2.439) -- cycle;
\draw[line width=0.2pt,color=gray,fill=gray,fill opacity=0.1] (1.666,-2.309) -- (1.666,-2.44) -- (2,-2.439) -- (1.924,-2.309) -- cycle;
\draw[line width=0.2pt,color=gray,fill=gray,fill opacity=0.1] (0.666,-1.862) -- (0.666,-1.732) -- (1,-1.732) -- (1,-1.862) -- cycle;
\draw[line width=0.2pt,color=gray,fill=gray,fill opacity=0.1] (0,-1.862) -- (0.075,-1.732) -- (0.333,-1.732) -- (0.333,-1.862) -- cycle;
\draw[line width=0.2pt,color=gray,fill=gray,fill opacity=0.1] (2,-1.862) -- (2,-1.732) -- (2.333,-1.732) -- (2.333,-1.862) -- cycle;
\draw[line width=0.2pt,color=gray,fill=gray,fill opacity=0.1] (2.666,-1.862) -- (2.666,-1.732) -- (2.924,-1.732) -- (3,-1.862) -- cycle;
\draw[line width=0.2pt,color=gray,fill=gray,fill opacity=0.1] (0,-0.707) -- (0.333,-0.707) -- (0.333,-0.577) -- (0.075,-0.577) -- cycle;
\draw[line width=0.2pt,color=gray,fill=gray,fill opacity=0.1] (2.667,-0.577) -- (2.667,-0.707) -- (3,-0.707) -- (2.925,-0.577) -- cycle;
\draw[line width=0.2pt,color=gray,fill=gray,fill opacity=0.1] (0,-1.155) -- (0.333,-1.155) -- (0.333,-1.285) -- (0.075,-1.285) -- cycle;
\draw[line width=0.2pt,color=gray,fill=gray,fill opacity=0.1] (2.667,-1.285) -- (2.667,-1.155) -- (3,-1.155) -- (2.925,-1.285) -- cycle;
\end{tikzpicture}
\caption{The symmetric difference}%
\label{fig:sim diff}%
\end{figure}
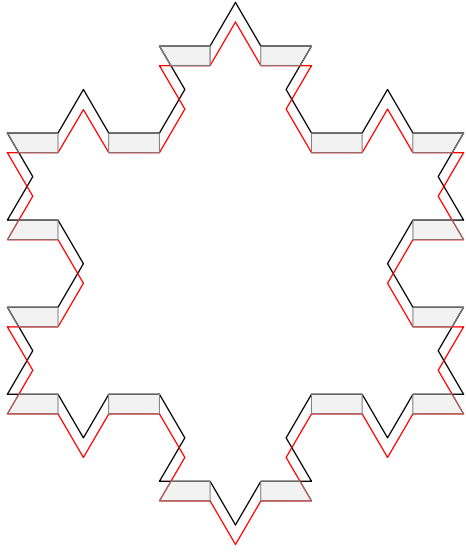

We now show that for every $0<\beta<1$ there exist sets that satisfy
(\ref{Diff simm da sopra}) and (\ref{Diff simm da sotto}).

\begin{example}
\label{n gamma}Let $0<\beta<1$, let $\gamma=\frac{\beta}{1-\beta}$, and for
every positive integer $n$ let%
\[
z_{n}=\frac{1}{n^{\gamma}}-\frac{1}{\left(  n+1\right)  ^{\gamma}},
\]
and%
\[
R_{n}=\left[  \frac{1}{n^{\gamma}}-\frac{1}{3}z_{n},\frac{1}{n^{\gamma}%
}\right]  \times\left[  0,1\right]  .
\]
Finally let%
\[
\Omega=\bigcup_{n=1}^{+\infty}R_{n}%
\]
be the union of the above rectangles. If $h=\left(  h_{1},0\right)  $ with
$0<\left\vert h_{1}\right\vert \leqslant\frac{1}{2}$ and $n_{0}$ is defined by%
\[
\frac{1}{3}z_{n_{0}+1}<\left\vert h_{1}\right\vert \leqslant\frac{1}%
{3}z_{n_{0}},
\]
then, for $n=1,\ldots,n_{0}$, we have $\left(  R_{n+1}+h\right)  \cap
R_{n}=\varnothing$, and since $\left\vert h\right\vert \approx z_{n_{0}%
}\approx\frac{1}{n_{0}^{\gamma+1}}$ we have%
\[
\left\vert \Omega\bigtriangleup\left(  \Omega+h\right)  \right\vert
\geqslant\sum_{n=1}^{n_{0}}\left\vert R_{n}\bigtriangleup\left(
R_{n}+h\right)  \right\vert =\sum_{n=1}^{n_{0}}\left\vert h_{1}\right\vert
=n_{0}\left\vert h_{1}\right\vert \approx c\left\vert h\right\vert
^{1-\frac{1}{\gamma+1}}=c\left\vert h\right\vert ^{\beta}.
\]
Now let $h=\left(  h_{1},h_{2}\right)  $. Since%
\[
A\bigtriangleup B\subset\left(  A\bigtriangleup C\right)  \cup\left(
B\bigtriangleup C\right)  ,
\]
we have%
\begin{align*}
&  \Omega\bigtriangleup\left(  h+\Omega\right)  \\
&  \subset\left[  \left(  \left(  h_{1},0\right)  +\left(  0,h_{2}\right)
+\Omega\right)  \bigtriangleup\left(  \left(  h_{1},0\right)  +\Omega\right)
\right]  \cup\left[  \left(  \left(  h_{1},0\right)  +\Omega\right)
\bigtriangleup\Omega\right]
\end{align*}
and therefore%
\[
\left\vert \Omega\bigtriangleup\left(  h+\Omega\right)  \right\vert
\leqslant\left\vert \left(  \left(  0,h_{2}\right)  +\Omega\right)
\bigtriangleup\Omega\right\vert +\left\vert \left(  \left(  h_{1},0\right)
+\Omega\right)  \bigtriangleup\Omega\right\vert .
\]
Then%
\[
\left\vert \Omega\bigtriangleup\left(  \Omega+\left(  h_{1},0\right)  \right)
\right\vert \leqslant\sum_{n=1}^{n_{0}}\left\vert R_{n}\bigtriangleup\left(
R_{n}+\left(  h_{1},0\right)  \right)  \right\vert +\frac{1}{n_{0}^{\gamma}%
}\approx c\left\vert h_{1}\right\vert ^{\beta}.
\]
Also, if $\left\vert h_{2}\right\vert \leqslant\frac{1}{2}$, then%
\[
\left\vert \Omega\bigtriangleup\left(  \Omega+\left(  0,h_{2}\right)  \right)
\right\vert =2\left\vert \Omega\right\vert \left\vert h_{2}\right\vert
\]
and thus%
\[
\left\vert \Omega\bigtriangleup\left(  h+\Omega\right)  \right\vert \leqslant
c\left\vert h\right\vert ^{\beta}.
\]

\end{example}


\begin{thebibliography}{99}                                                                                               %


\bibitem {AMT}H. Albrecher, J. Matou\v{s}ek, R. Tichy, \textit{Discrepancy of
point sequences on fractal sets}, Publ. Math. Debrecen \textbf{56} (2000), 233--249.

\bibitem {Alexander}R. Alexander, \textit{Principles of a new method in the
study of irregularities of distribution}. Invent. Math. \textbf{103} (1991), 279--296.

\bibitem {Beck0}J. Beck, \textit{On a problem of K. F. Roth concerning
irregularities of point distribution}. Invent. Math. \textbf{74} (1983), 477--487.

\bibitem {beck}J. Beck, \textit{Irregularities of point distribution I}. Acta
Math. \textbf{159} (1988), 1--49.

\bibitem {BC}J. Beck and W.W.L. Chen, \textquotedblleft Irregularities of
distribution\textquotedblright. Cambridge University Press (1987).

\bibitem {BCCGT}L. Brandolini, W.W.L. Chen, L. Colzani, G. Gigante, G.
Travaglini, \textit{Discrepancy and numerical integration on metric measure
spaces}. J. Geom. Anal. \textbf{29} (2019), 328--369.

\bibitem {BCT}L. Brandolini, L. Colzani, G. Travaglini, \textit{Average decay
of Fourier transforms and integer points in polyhedra}. Ark. Mat. \textbf{35}
(1997), 253--275

\bibitem {BBG}L. Brandolini, B. Gariboldi, G. Gigante, \textit{On a sharp
lemma of Cassels and Montgomery on manifolds}. Math. Ann. \textbf{379} (2021), 1807--1834.

\bibitem {BGT}L. Brandolini, G. Gigante, G. Travaglini, \textit{Irregularities
of distribution and average decay of Fourier transforms}. In: A panorama of
discrepancy theory, 159--220, Lecture Notes in Math., 2107, Springer, Cham, 2014.

\bibitem {BT22}L. Brandolini, G. Travaglini, \textit{Irregularities of
distribution and geometry of planar convex sets}. Adv. Math. \textbf{396}
(2022), Paper No. 108162.

\bibitem {chazelle}B. Chazelle. \textquotedblleft The discrepancy method.
Randomness and complexity\textquotedblright. Cambridge University Press (2000).

\bibitem {CMS}B.Chazelle, J. Matousek, M. Sharir, \textit{An elementary
approach to lower bounds in geometric discrepancy}. Discrete Comput. Geom.
\textbf{13} (1995), 363--381.

\bibitem {chen}W.W.L. Chen, Results and problems old and new in discrepancy
theory, in \textquotedblleft Discrepancy Theory\textquotedblright\ (D. Bilyk,
J. Dick, F. Pillichshammer Editors). De Gruyter (2020), 21--42.

\bibitem {chenLN}W.W.L. Chen, \textquotedblleft Lectures on Discrepancy
Theory\textquotedblright. Unpublished, https://www.williamchen-mathematics.info/ln.html.

\bibitem {CT}L. L. Cristea, R.T. Tichy, \textit{Discrepancies of point
sequences on the Sierpinski carpet}. Math. Slovaca, \textbf{53 }(2003), 351--367.

\bibitem {DT}M. Drmota, R.F. Tichy, \textquotedblleft Sequences, discrepancies
and applications\textquotedblright. Springer (1997).

\bibitem {Erdos}P. Erd\H{o}s, \textit{Problems and results on diophantine
approximations}. Compositio Math. \textbf{16} (1964), 52--65.

\bibitem {Fal2}K. Falconer, "Techniques in fractal geometry", John Wiley \&
Sons (1997).

\bibitem {Fal}K. Falconer, \textquotedblleft Fractal
Geometry\textquotedblright, John Wiley \& Sons (2014).

\bibitem {GL}G. Gigante, P. Leopardi, \textit{Diameter bounded equal measure
partitions of Ahlfors regular metric measure spaces}. Discrete Comput. Geom.
\textbf{57} (2017), 419--430.

\bibitem {Lap-Pea}M. L. Lapidus, E. P. J. Pearse, \textit{A tube formula for
the Koch snowflake curve, with applications to complex dimensions}. J. London
Math. Soc. \textbf{74} (2006), 397--414.

\bibitem {matousek}J. Matousek, \textquotedblleft Geometric discrepancy. An
illustrated guide. Springer (2010).

\bibitem {Mattila1}P. Mattila, Geometry of sets and measures in Euclidean
spaces, Cambridge University Press (1995).

\bibitem {Mattila}P. Mattila, Fourier analysis and Hausdorff dimension,
Cambridge University Press (2015).

\bibitem {montgomery}H. Montgomery, \textquotedblleft Ten Lectures on the
interface between Analytic Number Theory and Harmonic
Analysis\textit{\textquotedblright}. American Mathematical Society (1994).

\bibitem {roth}K. Roth. \textit{On irregularities of distribution}.
Mathematika \textbf{1} (1954), 73--79.

\bibitem {schmidt}W. Schmidt, \textit{Irregularities of distribution IV}.
Invent. Math. \textbf{7} (1969), 55--82.

\bibitem {SchmidtNotes}Schmidt W.M, \textquotedblleft Lectures on
irregularities of distribution\textquotedblright. Tata Institute of
Fundamental Research, Bombay (1977).

\bibitem {SS}E.M. Stein, R. Shakarchi, Real Analysis, Princeton University
Press, 2005

\bibitem {travaglini}G. Travaglini, \textquotedblleft Number theory, Fourier
analysis and Geometric Discrepancy\textquotedblright. Cambridge University
Press (2014).
\end{thebibliography}
\end{document}